\newcommand{\im}{\operatorname{im}}
\newcommand*\diff{\mathop{}\!\mathrm{d}}
\newcommand{\Diff}{\operatorname{Diff}}
\newcommand{\Id}{\operatorname{Id}}
\newcommand{\R}{\mathbb R}
\newcommand{\Z}{\mathbb Z}
\newcommand{\E}{\mathbb E}
\newcommand{\CC}{\mathbb C}
\newcommand{\CCC}{\mathscr{C}}
\newcommand{\F}{\mathcal{F}}
\newcommand{\feui}{\mathcal F}
\newcommand{\End}{\operatorname{End}}
\newcommand{\gr}{\operatorname{gr}}
\newcommand{\ad}{\operatorname{ad}}
\newcommand{\g}{\mathfrak{g}}
\newcommand{\gt}{\mathfrak{t}}
\newcommand{\p}{\mathfrak{p}}
\newcommand{\n}{\mathfrak{n}}
\newcommand{\dn}{\diff^{\nabla}}
\newcommand{\act}{\curvearrowright}
\newcommand{\ttt}{\mathfrak{t}}
\newcommand{\ho}{\operatorname{Hol}}
\newcommand\isomto{\stackrel{\sim}{\smash{\longrightarrow}\rule{0pt}{0.4ex}}}
\theoremstyle{plain}
\newtheorem{Theorem}{Theorem}[section]
\newtheorem{Lemma}[Theorem]{Lemma}
\newtheorem{Proposition}[Theorem]{Proposition}
\newtheorem{Corollary}{Corollary}[Theorem]
\newtheorem{Example}[Theorem]{Example}
\newtheorem{Examples}[Theorem]{Examples}
\theoremstyle{definition}
\newtheorem{Definition}[Theorem]{Definition}
\theoremstyle{remark}
\newtheorem{Remark}[Theorem]{Remark}
\DeclareSymbolFontAlphabet{\mathbb}{AMSb}
\DeclareSymbolFontAlphabet{\mathbbl}{bbold}
\title[Transverse parabolic structures \& BGG sequences]{Transverse parabolic structures and transverse BGG sequences}
\author{Clément Cren}
\subjclass[2020]{Primary: 58H05, 58A10 ; Secondary: 58A14, 58A30, 58J22}
\keywords{Bernstein-Gelfand-Gelfand operators, Foliation, Parabolic geometry, Pseudodifferential calculus, Analysis on Lie groups}
\address{Mathematisches Institut\\
Georg-August Universität Göttingen\\
Bunsenstraße 3-5\\
D-37073 Göttingen\\
Deutschland}
\email{\href{mailto:clement.cren@mathematik.uni-goettingen.de}{clement.cren@mathematik.uni-goettingen.de}}
\begin{document}

\begin{abstract}
Manifolds endowed with a parabolic geometry in the sense of Cartan come with natural sequences of differential operators and their analysis provide the so called (curved) BGG sequence of \v{C}ap, Slov\'{a}k and Sou\v{c}ek. The sequences involved do not form an elliptic complex in the sense of Atiyah but enjoy similar properties. The proper framework to study these operators is the filtered calculus associated to the natural filtration of the tangent bundle induced by the parabolic geometry. Such analysis was carried over by Dave and Haller in a very general setting. In this article we use their methods associated with the transversal index theory for filtered manifolds developed by the author in a previous paper to derive curved BGG sequences for foliated manifolds with transverse parabolic geometry.
\end{abstract}

\maketitle

\section{Introduction}

We pursue the ideas of the transversal Rockland condition in two directions: we replace operators by complexes and Rockland condition by the graded Rockland condition in the sense of \cite{dave2017graded}. The motivation is the creation of a Bernstein-Gelfand-Gelfand type complex for foliated manifolds with a transverse parabolic geometry. 

The BGG sequences have an algebraic origin in the work of Bernstein, Gelfand, and Gelfand \cite{BGG}. It served as a projective resolution for representations of highest weight of semi-simple Lie groups. Using Kostant's approach to the Borel-Weil-Bott theorem \cite{Kostant}, they were reinterpreted in a more geometric context as a resolution of some sheaves of constant functions on generalized flag varieties. Čap, Slovák, and Souček pushed further this approach, interpreting the generalized Verma modules used in the BGG resolution as jet bundles over the generalized flag varieties. This observation allowed them to generalize BGG sequences to manifolds with parabolic geometries in the sense of Cartan \cite{CurvedBGGArticle}, i.e. manifolds "locally modeled" over a specific generalized flag variety. 

Here we try to generalize these ideas to foliated manifolds with transverse parabolic geometry, i.e. the leaf space is "locally modeled" over the generalized flag varieties. Since the BGG sequence can be constructed from the twisted de Rham complex on a tractor bundle, we show that in the context of transverse parabolic geometries, the twisted transverse de Rham complex shares similar properties and is modeled (i.e. on the symbolic level) on the Chevalley-Eilenberg complex with values in the representation used to build the tractor bundle. 

More precisely, given a semi-simple Lie group $G$, a parabolic subgroup $P$ and a foliated manifold $M$ with transverse $(G,P)$ geometry (in a looser sense than in the literature), and a $G$-representation $\E$, we show that $M$ is a foliated filtered manifold in the sense of \cite{cren2022transverse}, and study a sequence of operators, the transversal twisted de Rham complex, $(\Omega^{0,\bullet}(M,E),\diff^{\nabla}_N)$. Here $E \to M$ is the tractor bundle associated to the representation $\E$ of $G$ and $\nabla$ is the corresponding tractor connection. The subscript \(N\) indicates that the de Rham differential is here restricted to the directions that are normal to the foliation. We then show that under a regularity condition on the geometry, the transversal osculating groupoid is identified to the trivial bundle with fiber the opposed nilpotent Lie group. We also show that under this condition the transversal complex becomes the Chevalley-Eilenberg cohomological complex at the transversal symbolic level. We can thus show that our sequence of operators satisfies the transversal (graded) Rockland condition. 

Using a Kostant type co-differential on the transverse forms, we define a sequence of BGG type operators on the corresponding homology bundles:
$$D_{\bullet} \colon \mathcal{H}_{\bullet} := (M \times H_{\bullet}(\p_+,\E)) \to \mathcal{H}_{\bullet +1}.$$
Using the result on the transverse de Rham sequence we prove that the transverse BGG sequence $(\mathcal{H}_{\bullet},D_{\bullet})$ is also transversally graded Rockland. Moreover, if the transverse BGG sequence is a complex, then so is the BGG sequence, and the natural map from de Rham to BGG becomes an isomorphism in cohomology.

This construction serves several purposes. First, BGG sequences have been proved useful in representation theory: from their invention to their use to construct Kasparov's $\gamma$-element, and several proofs of the Baum-Connes conjecture (see \cite{Chen,JulgKasparov,JulgSp} for the rank one case and \cite{YunckenBGG} for an example in higher rank). Secondly, it is well known that Cartan geometries can be used to model a large class of geometric structures. The parabolic ones represent projective and CR geometries among others. Our work can also be seen as an attempt to develop such geometries and index theoretic invariants on their spaces of leaves. Finally this work also serves as a large class of examples for the tools developed in our previous paper \cite{cren2022transverse}.

\section{Kostant's Hodge theory and the origins of the BGG machinery}\label{Algebraic}

We first recall the classical construction Kostant's Laplacian for a semi-simple Lie group $G$ and a parabolic subgroup $P$, as well as the associated Hodge theory. This construction was introduced in \cite{Kostant} as a way to prove the Borel-Weil-Bott theorem (a way to construct irreducible representations of a certain highest weight for a semi-simple Lie group) and is purely algebraic. It involves the Lie algebra homology and cohomology introduced in \cite{ChevalleyEilenberg,Koszul}, to perform a construction similar to the Hodge theory in Riemannian geometry. This analogy has a more concrete incarnation as the differential will be the local model (i.e. symbol) of a differential operator on the homogeneous space (and later on spaces with parabolic geometries). 

Let $G$ be a (real or complex) semi-simple Lie group and $P \subset G$ a parabolic subgroup. Denote by $\g$ and $\p$ their respective Lie algebras. We assume the Lie algebra $\g$ to be $|k|-$graded i.e.
$$\g = \bigoplus_{i = -k}^k \g_i, \ \forall i,j, [\g_i,\g_j] \subset \g_{i+j},$$
such that $\p = \bigoplus_{i\geq 0} \g_i$. Such a decomposition always exists and only needs an appropriate choice of Cartan subalgebra and positive root system (see \cite{CurvedBGGBook} for instance). 

The Killing form on $\g$ being non-degenerate, it induces isomorphisms of $\g_0$-modules $\g_i \cong \g_{-i}^*$ for $i \neq 0$. In particular we get an isomorphism of \(\g_0\)-modules $\left(\faktor{\g}{\p}\right)^* \cong \p_+$ where $\p_+ = \bigoplus_{i > 0}\g_i$ (not of Lie algebras since \(\p \subset \g\) is not an ideal). On the other hand, we can identify the quotient space $\faktor{\g}{\p}$ to the Lie algebra $\g_- = \bigoplus_{i < 0}\g_i$. These two identifications allow us to use the co-chain complex of $\g_-$ and the chain complex for $\p_+$ at the same time.
Indeed, let $\mathbb{E}$ be a $\g$-module so it can be restricted to a $\g_-$-module and a $\p_+$ module at the same time. Recall that the Chevalley-Eilenberg complex construction endows $\Lambda^{\bullet}\g_-^*\otimes \mathbb{E}$ with the structure of a co-chain complex with a de Rham type differential $\partial$ as co-boundary map. In a dual fashion, the complex $\Lambda^{\bullet} \p_+ \otimes \mathbb{E}$ has a structure of a chain complex with boundary map $\partial^*$. The duality result invoked earlier gives for each $n\geq 0$ an isomorphism $C_k(\p_+, \mathbb{E}) := \Lambda^k\p_+\otimes \mathbb{E} \cong \Lambda^k \g_-^* \otimes \mathbb{E} =: C^k(\g_-,\mathbb{E})$. 

Under these identification the maps $\partial$ and $\partial^*$ become adjoint to each other for natural inner products\footnote{If \(\g\) is real semi-simple, in the complex case we get a hermitian form.} and hermitian forms on $\g^*$ and $E$, see \cite{Kostant}\footnote{This justifies the notation $\partial^*$ for the homological differential.}. The map $\partial^*$ is called the Kostant co-differential. We can then form Kostant's Laplacian:
$$\Box_{\bullet} := \partial^*\partial + \partial\partial^* \colon C_{\bullet}(\p_+,\mathbb{E}) \to C_{\bullet}(\p_+,\mathbb{E}).$$

\begin{Lemma}Let $k\geq 0$ and $x \in \Lambda^k\left(\faktor{\g}{\p}\right)^*\otimes \mathbb{E}$. If $\partial \partial^*x = 0$ then $\partial^*x = 0$. If $\partial^*\partial x = 0$ then $\partial x = 0$.\end{Lemma}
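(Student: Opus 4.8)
The plan is to run the standard Hodge-theoretic argument, exploiting the adjointness of $\partial$ and $\partial^*$ recalled just above. The only structural input I need is that the pairing $\langle\cdot,\cdot\rangle$ on $C^\bullet(\g_-,\mathbb{E}) \cong C_\bullet(\p_+,\mathbb{E})$ making $\partial^*$ adjoint to $\partial$ is \emph{positive definite} (a genuine inner product in the real case, a positive-definite Hermitian form in the complex case); this is exactly the content of Kostant's construction. With this in hand both implications reduce to the elementary fact that a vector of zero norm vanishes.

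For the first implication, assume $\partial\partial^* x = 0$ with $x \in C^k := \Lambda^k(\g/\p)^*\otimes\mathbb{E}$. Then $\partial^* x \in C^{k-1}$ and $\partial(\partial^* x)\in C^k$, so $\partial\partial^* x$ and $x$ lie in the same space and may be paired. Applying adjointness to $a=\partial^* x\in C^{k-1}$ and $b = x\in C^k$ gives
$$0 = \langle \partial\partial^* x, x\rangle = \langle \partial^* x, \partial^* x\rangle = \|\partial^* x\|^2,$$
whence $\partial^* x = 0$ by positive-definiteness. The second implication is symmetric: if $\partial^*\partial x = 0$, then $\partial x \in C^{k+1}$ while $\partial^*\partial x, x\in C^k$, and adjointness (now pairing $\partial x$ against itself) yields
$$0 = \langle \partial^*\partial x, x\rangle = \langle \partial x, \partial x\rangle = \|\partial x\|^2,$$
so $\partial x = 0$.

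The one step I would flag as genuinely requiring care — everything else being formal — is the positive-definiteness of the form. On a real semisimple $\g$ the Killing form is indefinite, so the relevant pairing on $\Lambda^\bullet\g_-^*\otimes\mathbb{E}$ cannot simply be the restricted Killing form; one must instead use the positive-definite form built from a Cartan involution (equivalently, a compact real form) together with a suitable inner product on $\mathbb{E}$, exactly as in \cite{Kostant}. The crucial point is that for this choice $\partial^*$ is still the adjoint of $\partial$ \emph{and} the form is definite, so that $\|v\|^2 = 0$ forces $v = 0$; it is precisely this combination that makes the two norm computations above conclusive.
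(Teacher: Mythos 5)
Your argument is exactly the paper's proof, merely spelled out in full: the paper likewise invokes the positive-definite inner product (built from the Killing form via a $\g_0$-equivariant Cartan involution, following Kostant) for which $\partial$ and $\partial^*$ are adjoint, so that $0=\langle\partial\partial^*x,x\rangle=\|\partial^*x\|^2$ forces $\partial^*x=0$, and symmetrically for the other implication. Your explicit caveat about definiteness failing for the raw Killing form in the real case is precisely the point the paper's proof also flags, so the two arguments coincide.
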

\begin{proof}
Using the aforementioned inner product (or hermitian form) for which $\partial$ and $\partial^*$ are adjoint maps we immediately get the result. This inner product is constructed from the Killing form using a well chosen (in particular \(\g_0\)-equivariant) Cartan involution, see \cite{Kostant,CurvedBGGBook}.
\end{proof}

We can now perform a Hodge-type decomposition of the complex:

\begin{Theorem}[Kostant \cite{Kostant}]Let $n \geq 0$, we have the following decomposition, orthogonal for the Killing form:
$$\Lambda^n\left(\faktor{\g}{\p}\right)^*\otimes \mathbb{E} =  \im(\partial) \oplus \ker(\Box_n) \oplus \im(\partial^*).$$
Every (co)homology class in $H^n(\g_-,\mathbb{E})$ or $H_n(\p_+,\mathbb{E})$ has a unique harmonic (i.e. in $\ker(\Box_k)$) representative, this provide $\g_0$-equivariant sections of the natural projections of co-chains and chains $C^n(\g_-,\mathbb{E}) \to H^n(\g_-,\mathbb{E})$ and $C_n(\p_+,\mathbb{E})\to H_n(\p_+,\mathbb{E})$.
\end{Theorem}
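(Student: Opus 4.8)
The plan is to run the standard finite-dimensional Hodge argument on the finite-dimensional space $V_n := \Lambda^n(\faktor{\g}{\p})^* \otimes \mathbb{E}$, using the positive-definite inner product (resp.\ Hermitian form) built from the Killing form and the $\g_0$-equivariant Cartan involution, for which $\partial$ and $\partial^*$ are mutually adjoint. No functional-analytic input is needed; the content lies entirely in the interplay of $\partial$, $\partial^*$ and this form.

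First I would record that $\Box_n$ is self-adjoint and that $\ker \Box_n = \ker \partial \cap \ker \partial^*$. Indeed, pairing gives $\langle \Box_n x, x \rangle = \|\partial x\|^2 + \|\partial^* x\|^2$, so $\Box_n x = 0$ forces $\partial x = \partial^* x = 0$ --- this is exactly the preceding Lemma --- while the converse is immediate. Next I would verify the mutual orthogonality of the three candidate summands: since $\partial^2 = 0$ yields $(\partial^*)^2 = 0$ by adjointness, $\langle \partial a, \partial^* b \rangle = \langle a, (\partial^*)^2 b \rangle = 0$, so $\im \partial \perp \im \partial^*$; and for $h \in \ker \Box_n$ one has $\langle h, \partial a \rangle = \langle \partial^* h, a \rangle = 0$ and $\langle h, \partial^* b \rangle = \langle \partial h, b \rangle = 0$, so $\ker \Box_n$ is orthogonal to both images.

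For surjectivity I would invoke that a self-adjoint operator on a finite-dimensional inner product space satisfies $V_n = \ker \Box_n \oplus \im \Box_n$ with $\im \Box_n = (\ker \Box_n)^\perp$, together with $\im \Box_n = \im(\partial \partial^* + \partial^* \partial) \subseteq \im \partial + \im \partial^*$; combined with the orthogonality above this forces $\im \Box_n = \im \partial \oplus \im \partial^*$ and hence the claimed orthogonal decomposition. For the harmonic representatives, given a closed cochain $z \in \ker \partial$ I would write $z = \partial a + h + \partial^* b$ and apply $\partial$: as $\partial h = 0$ and $\partial^2 = 0$ this gives $\partial \partial^* b = 0$, whence $\partial^* b = 0$ by the Lemma, so $z - h = \partial a$ and $[z] = [h]$ in $H^n(\g_-, \mathbb{E})$. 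Uniqueness follows since a harmonic exact difference $h - h' = \partial a$ is coclosed, so $\|h - h'\|^2 = \langle h - h', \partial a \rangle = \langle \partial^*(h - h'), a \rangle = 0$. The homology statement for $H_n(\p_+, \mathbb{E})$ is symmetric, exchanging the roles of $\partial$ and $\partial^*$.

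The last point, and the only place where there is genuine work, is $\g_0$-equivariance: the decomposition itself is routine once a positive-definite adjoint pairing is in place, but one must ensure this inner product is $\g_0$-invariant, which is arranged by choosing a $\g_0$-equivariant Cartan involution (the real content, supplied by Kostant's construction). Granting this, $\partial$ is $\g_0$-equivariant by naturality of the Chevalley--Eilenberg differential, its adjoint $\partial^*$ and hence $\Box_n$ are $\g_0$-equivariant, each summand is a $\g_0$-submodule, and the projection onto $\ker \Box_n$ is a $\g_0$-equivariant section of $C^n(\g_-, \mathbb{E}) \to H^n(\g_-, \mathbb{E})$ and of $C_n(\p_+, \mathbb{E}) \to H_n(\p_+, \mathbb{E})$. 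The main obstacle is thus not the decomposition but the simultaneous demand that the form be positive-definite (so the "$\|\cdot\|^2 = 0 \Rightarrow \cdot = 0$" steps are valid) and $\g_0$-invariant (so the whole decomposition and the harmonic sections are equivariant) --- precisely the compatibility that the careful choice of Cartan involution guarantees.
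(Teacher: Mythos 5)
Your proof is correct and follows essentially the same route as the paper's: both run the standard finite-dimensional Hodge argument using the positive-definite, $\g_0$-invariant form for which $\partial$ and $\partial^*$ are adjoint, establish $\ker(\Box_n)=\ker(\partial)\cap\ker(\partial^*)$ and the directness of the three summands, and obtain spanning from $\im(\Box_n)\subseteq\im(\partial)+\im(\partial^*)$. The only cosmetic difference is that the paper concludes spanning by rank--nullity where you use $\im(\Box_n)=(\ker\Box_n)^{\perp}$ for a self-adjoint operator, and you spell out the harmonic-representative, uniqueness, and equivariance claims in more detail than the paper does.
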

\begin{proof}
We view elements of $\Lambda^k\left(\faktor{\g}{\p}\right)^*\otimes \mathbb{E}$ both as $k$-chains on $\p_+$ and $k$-co-chains on $\g_-$. To show the decomposition we first show that $\ker(\Box_k) = \ker(\partial)\cap\ker(\partial^*)$. If $x\in \ker(\Box_k)$ then we have $\partial\Box_k x = 0$ which gives $\partial\partial^*\partial x = 0$. Applying the previous lemma twice gives $\partial x =0$. The same reasoning with $\partial^*\Box_kx$ gives $\partial^*x = 0$ thus the inclusion $\ker(\Box_k)\subset \ker(\partial)\cap\ker(\partial^*)$. The converse inclusion is obvious by construction of $\Box_k$. Now another easy consequence of the lemma is that $\im(\partial)\cap\ker(\partial^*) = 0$ and $\im(\partial^*)\cap \ker(\partial) =0$. Combining these results we get $(\im(\partial)\oplus\im(\partial^*))\cap \ker(\Box_k) = 0$. We now have the three subspaces in direct sum and need to prove that they generate the whole space. From the inclusion $\im(\Box_k) \subset \im(\partial)\oplus\im(\partial^*)$ and the rank nullity theorem applied to $\Box_k$ we get the equality of dimensions between the sum of subspaces and the whole space and thus the Hodge decomposition.
It follows from the decomposition that $\ker(\partial^{(*)}) = \ker(\Box_k) \oplus \im(\partial^{(*)})$ and thus the isomorphisms (of $\g_0$-modules):
\[H_k(\p_+,\mathbb{E})\cong \ker(\Box_k) \cong H^k(\g_-,\mathbb{E}).\qedhere\]
\end{proof}

From this result we can explain the BGG machinery. The goal is to create a new complex that will compute the same cohomology. Denote by $\pi_k \colon \ker(\partial) \to H_k(\p_+,\mathbb{E})$ the natural projection and by \newline
$S_k \colon H_k(\p_+,\mathbb{E}) \isomto \ker(\Box_k) \subset C^k(\g_-,\mathbb{E})$ its section constructed in the previous theorem. The idea is now to define the Bernstein-Gelfand-Gelfand operator as:
$$D_k := \pi_{k+1}\partial_kS_k \colon H_k(\p_+,\mathbb{E}) \to H_{k+1}(\p_+,\mathbb{E}) $$
However on the algebraic level since $\ker(\Box_k) = \ker(\partial)\cap\ker(\partial^*)$ this just gives $D_k = 0$. Another way to see that this idea is too naive at the level of the Lie algebra is to use Kostant's decomposition of $H_k(\p_+,\mathbb{E})$ as a direct sum of irreducible representations of highest weight for $\g_0$. Since $H_k(\p_+,\mathbb{E})$ and $H_{k+1}(\p_+,\mathbb{E})$ involve different irreducible representations, there cannot be non-trivial homomorphisms of $\g_0$-modules between them. 

The trick at the algebraic level is to use the Verma modules associated to these representations instead. This was the original idea of Bernstein, Gelfand and Gelfand in \cite{BGG}. On the homogeneous space, Verma modules can be seen as homogeneous bundles using jets bundles. This observation allowed for a geometric formulation of the BGG machinery on manifolds with parabolic geometry in \cite{CurvedBGGArticle}. In this context the Kostant co-differential is still a bundle map but the differential $\partial$ becomes a differential operator (the de Rham differential with respect to some connection). Combining these two operators will give rise to a Hodge theory on differential forms and allow us to define the BGG operators as previously. This time however they will give a sequence of (non-trivial) differential operators. In the flat case (for the aforementioned connection) the de Rham sequence of operators is an actual co-chain complex and the BGG sequence is a complex as well. In this case, both compute the same cohomology.

\section{The geometric setting}\label{GeomSet}

We fix $G$ a semi-simple Lie group and $P \subset G$ a parabolic subgroup. We also assume the Lie algebra $\g$ to be $|k|-$graded i.e.
$$\g = \bigoplus_{i = -k}^k \g_i, \ \forall i,j, [\g_i,\g_j] \subset \g_{i+j}$$
such that $\p = \bigoplus_{i\geq 0} \g_i$. as in the previous section. If $M$ is a manifold endowed with a right $P$-action, we denote by $r$ the morphism $r \colon P \to \operatorname{Diffeo}(M)^{op}$ into the group of diffeomorphisms of the manifold. By differentiation, this induces a map $\p \to \mathfrak{X}(M)$. We denote this map by $\xi \mapsto X_{\xi}$, we have:
$$\forall \xi \in \p, \forall x \in M, X_{\xi}(x) = \frac{\diff}{\diff t}_{|t=0} r_{\exp(t\xi)}(x).$$
Note that this map corresponds to the the anchor of the action algebroid $M \rtimes \p \to M$.  

\begin{Definition}A transverse $(G,P)-$geometry is the data of $(M,\F,\omega)$ where $M$ is a manifold, $\F \subset TM$ is an integrable subbundle (i.e. a foliation) and $\omega \colon TM \to \g$ is a $\g$-valued 1-form with the following properties :
\begin{itemize}
\item[(i)] there is a proper smooth right action of $P$ on $M$ such that $\F$ is $P$-invariant and $M \rtimes \p \subset \F$ \footnote{We mean that the image of $M\rtimes \p$ by its anchor is included in $\F$. Condition $(iv)$ below implies that the $P$-action is locally free. We can thus identify $M\rtimes \p$ to its image by its anchor.}
\item[(ii)] $\ker(\omega) \subset \F \subset \omega^{-1}(\p)$
\item[(iii)] $\omega$ is $P$-equivariant i.e. $\forall h \in P, r_h^*\omega = Ad(h^{-1})\circ \omega$
\item[(iv)] $\forall \xi \in \p, \omega(X_{\xi}) = \xi$
\item[(v)] $\overline{\omega} \colon \faktor{TM}{\F} \to M \times \faktor{\g}{\p}$, which is defined by axiom (ii), is an isomorphism
\item[(vi)] $\forall X \in \Gamma(\ker(\omega)), L_X\omega = 0$
\end{itemize}
\end{Definition}

\begin{Example}If $\Gamma \subset G$ is a discrete subgroup then $\Gamma\backslash G$ has a transverse $(G,P)$-geometry for the Maurer-Cartan form. Here $\F$ is the foliation induced by the $P$-action. The leaf space then corresponds to $\Gamma\backslash G/P$.
\end{Example}

This definition is different from the usual one \cite{Blumenthal}. In the literature, a foliated manifold \((M_0,\feui_0)\) has a transverse \((G,P)\)-geometry if there exists a principal \(P\)-bundle \(M\to M_0\) whose total space is endowed with a foliation \(\overline{\feui_0}\) which is \(P\)-invariant, intersects the bundle of vertical vectors trivially, and projects onto \(\feui_0\). The total space of the bundle is also endowed with a \(\g\)-valued 1-form satisfying similar properties as in the definition before. With this setting, people focus on the manifold \(M_0\) and the principal bundle acts as some extra data.
Here we focus on the transverse structure. Because of this, we can work directly with the total space of the principal bundle, \(M\). Indeed, if we denote by \(\feui:= \overline{\feui_0}\oplus \{X_{\xi}, \ \xi \in \p\}\subset TM\), we obtain a new foliation. The holonomy groupoid (see Section \ref{Holonomy}) of \(\feui\) is Morita equivalent to the one of \(\feui_0\), which means they have the same transverse geometry. This generalization allows to consider non-necessarily free actions of \(P\) on \(M\). Because of axiom (iv) however, they are at least locally free. When the action is free we can quotient out the manifold and its foliation, \((M_0,\feui_0) := \left(\faktor{M}{P},\faktor{\ker(\omega)}{P}\right)\) (or take \(\faktor{\feui}{P}\) directly, the foliation is the same since the \(X_{\xi}, \xi \in \mathfrak{p}\) are tangent to the \(P\)-orbits in \(M\)). The foliated  manifold \((M_0,\feui_0)\) then has a transverse \((G,P)\)-geometry in the sense of \cite{Blumenthal}.

Regardless of the definition, the intuition behind these spaces is that the space of leaves \(\faktor{M}{\feui}\) has to be seen as a space that "locally looks like" the homogeneous space \(\faktor{G}{P}\). By this we mean that the bundle of space of leaves \(\faktor{M}{\ker(\omega)} \to \faktor{M}{\F}\) locally mimics the behavior of the principal \(P\)-bundle \(G\to \faktor{G}{P}\) with the Cartan connection replacing the Maurer-Cartan form.



\begin{Examples}We give the geometric structures on the space of leaves corresponding to the pair $(G,P)$:
\begin{itemize}
\item $G = PGL(n+1)$ and $P$ the isotropy group of a line correspond to projective geometry
\item $G = O(n+1,1)$ and $P$ the stabilizer of a point on the conformal sphere correspond to conformal geometry
\item $G = SU(p+1,q+1)$ and $P$ the stabilizer of an isotropy line in $\CC^{p+q+2}$ correspond to CR geometry
\item $G = SP(n+1)$ and $P$ the stabilizer of a line in $\R^{2(n+1)}$ correspond to projective contact geometry
\end{itemize}
\end{Examples}

\begin{Proposition}$\ker(\omega)$ is a foliation.\end{Proposition}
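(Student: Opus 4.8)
The plan is to show that $\ker(\omega)$ is a subbundle of $TM$ of constant rank and that it is involutive; the Frobenius theorem will then give that it is a foliation. First I would establish the constant-rank (subbundle) property by proving that $\omega \colon TM \to M \times \g$ is fiberwise surjective. By axiom (iv) we have $\omega(X_\xi) = \xi$ for every $\xi \in \p$, so $\p$ lies in the image of $\omega_x$ at each point $x$. On the other hand, axiom (v) states that the induced map $\overline{\omega} \colon \faktor{TM}{\F} \to M \times \faktor{\g}{\p}$ is an isomorphism; since $\overline{\omega}$ is precisely the map induced by $\omega$ on the quotient, the composition of $\omega_x$ with the projection $\g \to \faktor{\g}{\p}$ is surjective, i.e. $\im(\omega_x) + \p = \g$. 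Combining the two facts gives $\im(\omega_x) = \g$ for all $x$, so $\omega$ has constant rank $\dim \g$ and $\ker(\omega)$ is a smooth subbundle of $TM$, of rank $\dim M - \dim \g$.

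Next I would prove involutivity using axiom (vi). Fixing a basis of the finite-dimensional coefficient space $\g$, the form $\omega$ becomes a tuple of ordinary scalar $1$-forms, so the Lie derivative is computed componentwise and the standard identity
$$(L_X\omega)(Y) = X\bigl(\omega(Y)\bigr) - \omega([X,Y])$$
holds for all vector fields $X,Y$. Let $X, Y \in \Gamma(\ker(\omega))$. Then $\omega(Y) = 0$, hence $X\bigl(\omega(Y)\bigr) = 0$, and by axiom (vi) we have $L_X\omega = 0$. The identity above then forces $\omega([X,Y]) = 0$, that is $[X,Y] \in \Gamma(\ker(\omega))$. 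Thus $\ker(\omega)$ is closed under the Lie bracket. (Equivalently, one could apply Cartan's formula $L_X\omega = \iota_X \diff\omega + \diff\iota_X\omega$, noting that $\iota_X\omega = 0$ on $\ker(\omega)$ reduces (vi) to $\diff\omega(X,Y) = 0$ and yields the same conclusion.)

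Having a constant-rank involutive distribution, the Frobenius theorem shows it is integrable, so $\ker(\omega)$ is a foliation. The main point requiring a little care is the passage from property (vi), stated for the $\g$-valued form $\omega$, to the scalar Lie-derivative identity; but this is immediate once $\g$ is treated as a fixed coefficient space. I expect the constant-rank verification and the componentwise reading of (vi) to be the only genuinely non-formal steps, the involutivity computation itself being a one-line consequence.
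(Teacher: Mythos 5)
Your proof is correct and the involutivity argument is essentially the paper's: the paper computes $\omega([X,Y]) = -\diff\omega(X,Y) = -\iota_X\diff\omega(Y) = -L_X\omega(Y) = 0$ using axiom (vi), which is exactly the Cartan-formula variant you mention in parentheses. Your additional verification that $\ker(\omega)$ has constant rank (via surjectivity of $\omega_x$ from axioms (iv) and (v)) is a reasonable extra care that the paper leaves implicit.
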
  
\begin{proof}
Let $X,Y \in \Gamma(\ker(\omega))$, 
\begin{align*}
\omega([X,Y]) &= X\cdot\omega(Y) - Y\cdot\omega(X) - \diff\omega(X,Y)  \\
			  &= -\diff\omega(X,Y) \\
			  &= -\iota_X\diff\omega(Y) \\
			  &= -L_X\omega(Y) \\
			  &= 0. \tag*{\qedhere}
\end{align*}
\end{proof}

\begin{Corollary}The foliation $\F$ splits into the direct sum:
$$\F = \ker(\omega) \oplus M\rtimes \p.$$
Both are subalgebroids.
\end{Corollary}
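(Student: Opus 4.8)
The plan is to realize the splitting as the one induced by a section of the restricted form $\omega|_\F$. First I would note that axiom (ii), $\ker(\omega)\subset\F\subset\omega^{-1}(\p)$, guarantees that $\omega$ restricts to a well-defined bundle map $\omega|_\F\colon\F\to M\times\p$ whose kernel is exactly $\ker(\omega)$ (because $\ker(\omega)\subset\F$). On the other side, the anchor of the action algebroid furnishes a bundle map $\sigma\colon M\times\p\to\F$, $(x,\xi)\mapsto X_\xi(x)$, landing in $\F$ by axiom (i). The crucial identity is axiom (iv), $\omega(X_\xi)=\xi$, which says precisely that $\omega|_\F\circ\sigma=\operatorname{id}_{M\times\p}$.

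From this single relation the first assertion follows by the splitting lemma for vector bundles. Since $\sigma$ has a left inverse it is fiberwise injective of constant rank $\dim\p$, so the anchor identifies $M\rtimes\p$ with a subbundle of $\F$; and $\omega|_\F$ is a surjection split by $\sigma$. Hence $\F=\ker(\omega|_\F)\oplus\im(\sigma)=\ker(\omega)\oplus(M\rtimes\p)$. Concretely, the intersection is trivial because $v=X_\xi(x)\in\ker(\omega)_x$ forces $\xi=\omega(v)=0$, whence $v=0$; and the sum fills $\F$ because for $w\in\F_x$ one has $\omega(w)\in\p$ and $\omega\bigl(w-\sigma(\omega(w))\bigr)=\omega(w)-\omega(w)=0$, so $w-\sigma(\omega(w))\in\ker(\omega)$.

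For the last sentence, $\ker(\omega)$ is involutive, hence a Lie subalgebroid of $\F$, by the Proposition just proved. For $M\rtimes\p$ I would use that the infinitesimal right action $\xi\mapsto X_\xi$ is a Lie algebra homomorphism, so $[X_\xi,X_\eta]=X_{[\xi,\eta]}$; as $\p$ is a subalgebra, $[\xi,\eta]\in\p$ and the bracket stays in $\im(\sigma)$. Combined with the Leibniz rule (the function-coefficient terms are again $C^\infty(M)$-combinations of the $X_\xi$), the image is involutive, so it is a subalgebroid, and its bracket matches that of the action algebroid since the anchor is by construction an algebroid morphism.

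The computation is routine throughout; the only point requiring care is that the image of $M\rtimes\p$ genuinely be a subbundle, equivalently that $\sigma$ have constant rank, which is exactly what the left-inverse identity $\omega|_\F\circ\sigma=\operatorname{id}$ provides. I therefore expect no real obstacle beyond organizing these observations around the splitting of $\omega|_\F$.
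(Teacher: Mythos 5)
Your proof is correct and follows the same overall scheme (trivial intersection plus spanning), but it establishes the spanning inclusion differently, and more self-containedly, than the paper. The paper's proof is three lines: directness of the sum from axiom (iv), the inclusion $\supset$ from axioms (i) and (ii), and the reverse inclusion $\F \subset \ker(\omega)\oplus M\rtimes\p$ asserted to be ``a consequence of axiom (v)'' --- presumably a rank count, since (iv) and (v) together make $\omega$ fiberwise surjective onto $\g$, whence $\rk\ker(\omega)+\dim\p=\rk\F$. You instead exhibit the splitting explicitly through the section $\sigma(x,\xi)=X_\xi(x)$ and the identity $\omega|_{\F}\circ\sigma=\Id$, decomposing $w=(w-\sigma(\omega(w)))+\sigma(\omega(w))$; this uses only axioms (i), (ii) and (iv), avoids axiom (v) altogether, and has the side benefit of showing that $\ker(\omega)$ is a subbundle of constant rank $\rk\F-\dim\p$ (a point the paper tacitly assumes when it calls $\ker(\omega)$ a foliation in the preceding Proposition). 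You also spell out the subalgebroid claim for $M\rtimes\p$ via $[X_\xi,X_\eta]=X_{[\xi,\eta]}$ and the Leibniz rule, which the paper leaves entirely implicit. I see no gap; the only care point you correctly flag --- constancy of the rank of $\sigma$ --- is settled by the left-inverse identity.
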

\begin{proof}
By axiom $(iv)$ we know the sum is direct. We already know the inclusion $\supset$. The other is a consequence of axiom $(v)$.
\end{proof}

\begin{Definition}The curvature form of $\omega$ is $K = \diff \omega + [\omega,\omega]_{\g} \in \Omega^2(M,\g)$.\end{Definition}

Let us now define the filtration of $TM$ induced by the transverse parabolic geometry and the associated osculating nilpotent Lie algebra bundle. For $i \in \Z$ let $\g^i = \bigoplus_{j \geq i}\g_j$. We obtain a filtration $\cdots \supset \g^{i-1} \supset \g^i \supset \g^{i+1} \supset \cdots$, as well as the relations $\forall i,j\in \Z, [\g^i,\g^j] \subset \g^{i+j}$ and $\forall i, \g_i = \faktor{\g^i}{\g^{i+1}}$. Define a Lie bracket with:
\begin{align*}
[\cdot,\cdot] \colon \faktor{\g^{-i}}{\g^{-i+1}}\times \faktor{\g^{-j}}{\g^{-j+1}} &\to \faktor{\g^{-i-j}}{\g^{-i-j+1}} \\
						(X,Y)                                                  &\mapsto [X,Y]_{\g} \mod  \g^{-i-j+1}.
\end{align*}
It endows $\bigoplus_{i\leq 0}\g_i = \bigoplus_{i\leq 0} \faktor{\g^i}{\g^{i+1}}$ with a Lie algebra structure. This structure is not the one induced from $\g$ since $\p_+$ is not an ideal in $\g$. The subspace $\g_- = \bigoplus_{i< 0}\g_i$ is then an ideal and the quotient is isomorphic to \(\g_0\). As for \(\p_+\), since $\p$ is not an ideal in $\g$, the quotient map $\g \to \faktor{\g}{\p} \cong \g_-$ is not a Lie algebra homomorphism.

Let $TM = H^{-r} \supset \cdots \supset H^{-1} \supset H^0$ be a foliated filtration i.e. 
$$\forall i,j \left[\Gamma(H^{-i}), \Gamma(H^{-j})\right] \subset \Gamma(H^{-i-j}).$$
Using the Lie bracket of vector fields we get in the same fashion as before
$$[\cdot,\cdot] \colon \faktor{\Gamma(H^{-i})}{\Gamma(H^{-i+1})}\times \faktor{\Gamma(H^{-j})}{\Gamma(H^{-j+1})} \to \faktor{\Gamma(H^{-i-j})}{\Gamma(H^{-i-j+1})}$$
This bracket is tensorial in both variables for \(i,j\geq 1\). With the identifications 
$$\faktor{\Gamma(H^{-i})}{\Gamma(H^{-i+1})} = \Gamma\left(\faktor{H^{-i}}{H^{-i+1}}\right),$$ 
for $i \geq 1$, we get a fiberwise Lie bracket $[\cdot,\cdot] \colon \ttt_HM \to \ttt_HM$. Here we have denoted:
$$\gt_HM = H^{-1} \oplus \faktor{H^{-2}}{H^{-1}} \oplus \cdots \oplus \faktor{H^{-r}}{H^{-r+1}},$$
and considered \(H^0\) as a subspace of \(H^1\), ignoring things in degree 0 and considering them as elements of degree 1.
The algebroid $\ttt_HM$ is a bundle of nilpotent Lie algebras (\textit{a priori} not locally trivial). Denote by $T_HM$ the bundle of (connected, simply connected) nilpotent Lie groups integrating them using the Baker-Campbell-Hausdorff formula.

For all $x \in M$, $H^0_x \subset \ttt_{H,x}M$ is included in the center. Indeed, since \(H^0\) preserves the other bundles under Lie brackets of vector fields, they do not augment the degree in the filtration. Since they are considered as having order 1 however, the Lie bracket becomes 0 in the associated grading. We denote by $\ttt_{\sfrac{H}{H^0}}M$ the quotient Lie algebra bundle and $T_{\sfrac{H}{H^0}}M$ the corresponding Lie group bundle. Note that $\ttt_{\sfrac{H}{H^0}}M$ can be constructed directly using the previous associated grading construction with the vector bundle \(\faktor{TM}{H^0}\).

Back to our setting, the Cartan connection $\overline{\omega}$ yields an identification of vector bundles: 
$$\faktor{TM}{\F} \cong M\times \faktor{\g}{\p}.$$ 
The Lie brackets on vector fields and on $\g$ might however fail to be compatible. Requiring $\omega$ to be a Lie algebra homomorphism would be too strong. However, we can define a filtration on $TM$ and get weaker conditions in order to have $\ttt_{\sfrac{H}{H^0}}M \cong M \times \g_-$. Let $\overline{H}^{-i} = \overline{\omega}^{-1}\left(\faktor{\g^{-i}}{\p}\right) \subset \faktor{TM}{\F}$ and $H^{-i} \subset TM$ its lift, in particular $H^0 = \F$ and $H^{-r} = TM$.

\begin{Definition}A $(G,P)$-transverse geometry is called regular if the sub-bundles  $(H^{-i})_{i\geq 0}$ form a foliated Lie filtration and $\ttt_{\sfrac{H}{H^0}}M \cong M \times \g_-$\footnote{As $P$-equivariant vector bundles where the action $P\act \g_-$ is trivial.} as bundles of Lie algebras.\end{Definition}

Although the requirement of trivial bundle might seem strong, remember that we are working on a generalization of the total space of a principal \(P\)-bundle. On the model case \(G \to \faktor{G}{P}\), the bundles constructed on the homogeneous space are quotients by the \(P\)-action of trivial bundles on \(G\). The same happens for manifolds with \((G,P)\)-geometry, see \cite{CurvedBGGBook}. Because of this, all the bundles considered on \(M\) will be trivial. However, they will need to be equivariant for the \(P\)-action, and also for the holonomy action (see Section \ref{Holonomy action} below). This way, they have to be understood as non-trivial bundles over the leaf space \(\faktor{M}{\feui}\).

We now investigate geometric conditions to ensure the regularity of a transverse \((G,P)\)-geometry.

\begin{Lemma}Let \((H^{-i})_{i\geq 0}\) be the filtration induced by a transverse \((G,P)\)-geometry on \(M\), then: $$\forall i \geq 0, \left[\Gamma(H^{-i}),\Gamma(H^0)\right] \subset \Gamma(H^{-i}).$$\end{Lemma}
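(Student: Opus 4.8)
The plan is to reduce the statement to a pointwise condition on $\omega$ and then to a short computation with the Lie derivative. First I would record the concrete description of the filtration. Unwinding the definition of the lift, a vector field $X$ lies in $\Gamma(H^{-i})$ precisely when its class in $\faktor{TM}{\F}$ is sent by $\overline{\omega}$ into $\faktor{\g^{-i}}{\p}$; since $\overline{\omega}(\overline{X}) = \omega(X) \bmod \p$ and $\p = \g^0 \subset \g^{-i}$ for every $i \geq 0$, this simply means that the $\g$-valued function $\omega(X)$ takes values in $\g^{-i} = \bigoplus_{j \geq -i}\g_j$. In other words $H^{-i} = \omega^{-1}(\g^{-i})$; note in particular that for $i = 0$ this recovers $H^0 = \F$, using that axioms (ii) and (iv) together with the Corollary give $\omega^{-1}(\p) = \ker(\omega) \oplus (M \rtimes \p) = \F$.

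Next I would set up the reduction. Fix $X \in \Gamma(H^{-i})$ and $Y \in \Gamma(H^0) = \Gamma(\F)$. By the Corollary, $\Gamma(\F) = \Gamma(\ker(\omega)) \oplus \Gamma(M \rtimes \p)$, and the second summand is globally framed by the fundamental vector fields $X_{\xi_1}, \dots, X_{\xi_m}$ attached to a basis $(\xi_a)$ of $\p$ (these are pointwise linearly independent by axiom (iv), since $\omega(X_{\xi}) = \xi$). Because the bracket is $\R$-bilinear and satisfies $[X, fW] = f\,[X,W] + (X \cdot f)\,W$, and because $H^0 \subset H^{-i}$ forces the extra term $(X \cdot f)\,W$ into $\Gamma(H^{-i})$ already, it is enough to treat the two cases $Y \in \Gamma(\ker(\omega))$ and $Y = X_{\xi}$ with $\xi \in \p$ fixed.

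The two cases are then handled by the same one-line computation with Cartan's formula $(L_Y\omega)(X) = Y \cdot \omega(X) - \omega([Y,X])$. If $Y \in \Gamma(\ker(\omega))$, axiom (vi) gives $L_Y\omega = 0$, hence
\[
\omega([X,Y]) = -\,\omega([Y,X]) = -\,Y \cdot \omega(X).
\]
If $Y = X_{\xi}$, differentiating the equivariance axiom (iii) along the flow $r_{\exp(t\xi)}$ of $X_\xi$ yields $L_{X_\xi}\omega = -\ad(\xi)\circ\omega$, so that
\[
\omega([X,X_{\xi}]) = -\,X_{\xi}\cdot\omega(X) - [\xi,\omega(X)].
\]

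Finally I would check that both right-hand sides land in $\g^{-i}$, which by the first step is exactly the claim $[X,Y] \in \Gamma(H^{-i})$. The terms $Y \cdot \omega(X)$ and $X_\xi \cdot \omega(X)$ are directional derivatives of the $\g^{-i}$-valued function $\omega(X)$, hence again $\g^{-i}$-valued since $\g^{-i}$ is a fixed linear subspace; and $[\xi, \omega(X)] \in [\g^0, \g^{-i}] \subset \g^{-i}$ by the graded relation $[\g^0, \g^{-i}] \subset \g^{-i}$. I expect the only genuinely delicate point to be the first step — pinning down that the lift $H^{-i}$ is cut out by $\omega^{-1}(\g^{-i})$ and extracting the identity $L_{X_\xi}\omega = -\ad(\xi)\circ\omega$ from (iii) with the correct sign; after that the grading bookkeeping is automatic.
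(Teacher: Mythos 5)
Your proof is correct, and it reaches the same computational core as the paper --- split $\Gamma(\F)$ into $\Gamma(\ker(\omega))\oplus\Gamma(M\rtimes\p)$ via the Corollary, then use axiom (vi) for the first summand and the infinitesimal form $L_{X_\xi}\omega=-\ad(\xi)\circ\omega$ of axiom (iii) for the second --- but it packages the argument differently. The paper expands $\omega([X,Y])$ via $\diff\omega(X,Y)=X\cdot\omega(Y)-Y\cdot\omega(X)-\omega([X,Y])$ and the identity $\diff\omega=K-[\omega,\omega]_{\g}$, thereby reducing the lemma to the statement $\iota_XK=0$ for all $X\in\Gamma(\F)$ (Lemma~\ref{CurvBas}); that detour is deliberate, since the vanishing of the curvature in foliated directions is reused later (it is what makes $R^{\nabla}$ a form of bidegree $(2,0)$ in Corollary~\ref{Curvature Indentities} and feeds into Proposition~\ref{CurvatureRegular}). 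Your direct Lie-derivative computation, $(L_Y\omega)(X)=Y\cdot\omega(X)-\omega([Y,X])$ applied case by case, is slightly more elementary and self-contained, at the cost of not producing this reusable byproduct. Two small remarks: your identification $H^{-i}=\omega^{-1}(\g^{-i})$ is right, but the inclusion $\omega^{-1}(\p)\subset\F$ you invoke for $i=0$ comes from axiom (v) (injectivity of $\overline{\omega}$) rather than from (iv) and the Corollary; and the $\CCC^{\infty}(M)$-linearity reduction to frame fields $X_{\xi_a}$ is fine because the error term $(X\cdot f)X_{\xi_a}$ lies in $\Gamma(H^0)\subset\Gamma(H^{-i})$, exactly as you say.
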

\begin{proof}
Let $i\geq 0$ and $X \in \Gamma(H^0), Y \in \Gamma(H^{-i})$ we need to show that $\omega([X,Y]) \in \CCC^{\infty}(M,\g^{-i})$. We have
$$\omega([X,Y]) = X\cdot \omega(Y) - Y\cdot \omega(X) - \diff \omega(X,Y),$$
but $\omega(Y) \in \CCC^{\infty}(M,\g^{-i})$ and $\g^{-i}$ being a vector subspace of $\g$, then 
$$X\cdot\omega(Y) \in \CCC^{\infty}(M,\g^{-i}).$$
Likewise, $\omega(X) \in \CCC^{\infty}(M,\p) \subset \CCC^{\infty}(M,\g^{-i})$ as $-i \leq 0$ so we also have 
$$Y\cdot\omega(X) \in \CCC^{\infty}(M,\g^{-i}).$$
Finally $\diff\omega(X,Y) = K(X,Y) - [\omega(X),\omega(Y)]_{\g}$. We already know that the action of $\p$ preserves the filtration: $[\g^i,\p]\subset \g^i$. We thus need to show that $K(X,Y) \in \CCC^{\infty}(M,\g^{-i})$. We show a stronger result that will be useful later on.
\end{proof}

\begin{Lemma}\label{CurvBas}Let \((M,\feui)\) be a foliated manifold with transverse \((G,P)\)-geometry, \(K\) its curvature tensor. Then: 
$$\forall X \in \Gamma(\F), \iota_X K =0.$$ \end{Lemma}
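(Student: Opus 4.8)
The plan is to use that $K$ is a tensorial $\g$-valued $2$-form, so that $\iota_X K = 0$ is a pointwise condition that need only be verified on a set of vector fields spanning $\F$. By the Corollary above, $\F = \ker(\omega) \oplus M\rtimes \p$, so it suffices to treat two cases separately: $X \in \Gamma(\ker(\omega))$, and $X = X_\xi$ a fundamental vector field of the $P$-action, for $\xi \in \p$. Since $K$ is tensorial, handling these two families of generators settles all of $\Gamma(\F)$.

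The computational core is a single rewriting of the structure equation. Expanding $\diff\omega$ on vector fields, using $K(X,Y) = \diff\omega(X,Y) + [\omega(X),\omega(Y)]_{\g}$ and the identity $(L_X\omega)(Y) = X\cdot\omega(Y) - \omega([X,Y])$, I would obtain
$$K(X,Y) = (L_X\omega)(Y) - Y\cdot\omega(X) + [\omega(X),\omega(Y)]_{\g},$$
and then read off both cases from this formula. For $X \in \Gamma(\ker(\omega))$ the last two terms vanish because $\omega(X)=0$, and axiom (vi) gives $L_X\omega = 0$; hence $K(X,Y) = 0$ for every $Y$, so $\iota_X K = 0$ at once.

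For $X = X_\xi$, axiom (iv) gives $\omega(X_\xi) = \xi$, a \emph{constant} element of $\g$, so $Y\cdot\omega(X_\xi) = 0$ and we are left with $K(X_\xi,Y) = (L_{X_\xi}\omega)(Y) + [\xi,\omega(Y)]_{\g}$. It remains to compute the Lie derivative. Since the flow of $X_\xi$ is the one-parameter family $r_{\exp(t\xi)}$, differentiating the $P$-equivariance axiom (iii) at the identity yields
$$L_{X_\xi}\omega = \frac{\diff}{\diff t}\Big|_{t=0} r_{\exp(t\xi)}^*\omega = \frac{\diff}{\diff t}\Big|_{t=0}\Ad(\exp(-t\xi))\circ\omega = -\ad(\xi)\circ\omega,$$
that is $(L_{X_\xi}\omega)(Y) = -[\xi,\omega(Y)]_{\g}$. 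The two remaining terms then cancel, giving $\iota_{X_\xi} K = 0$.

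The main obstacle is precisely this infinitesimal form of equivariance in the second case: one must identify the flow of $X_\xi$ with $r_{\exp(t\xi)}$ (immediate from the definition of $X_\xi$) and differentiate $r_h^*\omega = \Ad(h^{-1})\circ\omega$ along it to produce the Lie derivative. Everything else reduces to the structure equation together with axioms (iv) and (vi). Combining the two cases with the tensoriality of $K$ yields $\iota_X K = 0$ for all $X \in \Gamma(\F)$, which also completes the preceding lemma since it shows in particular $K(X,Y) \in \CCC^{\infty}(M,\g^{-i})$ whenever $X \in \Gamma(H^0) \subset \Gamma(\F)$.
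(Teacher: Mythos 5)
Your proposal is correct and follows essentially the same route as the paper: split $\F = \ker(\omega) \oplus M\rtimes\p$ using tensoriality of $K$, kill the $\ker(\omega)$ case with axiom (vi), and in the $X_\xi$ case differentiate the equivariance axiom (iii) along the flow $r_{\exp(t\xi)}$ to get $L_{X_\xi}\omega = -\ad(\xi)\circ\omega$, which cancels the bracket term. The only cosmetic difference is that the paper invokes Cartan's magic formula $\iota_{X}\diff\omega = L_X\omega - \diff\iota_X\omega$ where you expand $\diff\omega$ on vector fields directly; the computations are identical.
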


\begin{proof}
We use the fact that $\F = M\rtimes \p \oplus \ker(\omega)$.

First case, $X \in \ker(\omega)$ : 
We have $\omega(X) = 0$ so the bracket part vanishes and $\iota_X\diff\omega = L_X\omega = 0$ by axiom $(vi)$, hence $\iota_XK = 0$.

Second case, $X = X_{\xi}, \xi \in \p$ :
$\iota_{X_{\xi}}\omega$ is a constant function so $\diff \iota_{X_{\xi}}\omega = 0 $ and $\iota_{X_{\xi}}\diff\omega = L_{X_{\xi}}\omega$.
\begin{align*}
L_{X_{\xi}}\omega &= \frac{\diff}{\diff t}_{|t=0} r^*_{\exp(t\xi)}\omega \\
				  &= \frac{\diff}{\diff t}_{|t=0} Ad(\exp(-t\xi))\circ \omega \\
				  &= - ad(\xi)\circ \omega \\
				  &= - [\xi,\omega] \\
				  &= - [\omega(X_{\xi}),\omega] \\
				  &= - \iota_{X_{\xi}}[\omega,\omega]_{\g}.
\end{align*}
Therefore $\iota_{X_{\xi}}K =  - \iota_{X_{\xi}}[\omega,\omega]_{\g}  + \iota_{X_{\xi}}[\omega,\omega]_{\g} =0$.
\end{proof}

\begin{Proposition}\label{CurvatureRegular}Let \((H^{-i})_{i\geq 0}\) be the filtration induced by a transverse \((G,P)\)-geometry on \(M\), then:
\begin{itemize}
\item[i)] $(H^{-i})_{i\geq 1}$ is a Lie filtration if and only if:
$$\forall i,j \geq 1, K(H^{-i},H^{-j}) \subset \g^{-i-j}.$$
\item[ii)] The $(G,P)$-transverse geometry is regular if and only if:
$$\forall i,j \geq 1, K(H^{-i},H^{-j}) \subset \g^{-i-j+1}.$$
\end{itemize}
\end{Proposition}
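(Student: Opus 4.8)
The plan is to reduce both statements to a single computation of $\omega([X,Y])$ for sections $X\in\Gamma(H^{-i})$, $Y\in\Gamma(H^{-j})$ with $i,j\geq 1$, and then read off (i) and (ii) by bookkeeping filtration degrees. First note that, by the very definition of $H^{-i}$ as the lift of $\overline{\omega}^{-1}(\faktor{\g^{-i}}{\p})$, membership $X\in\Gamma(H^{-i})$ is equivalent to $\omega(X)\in\CCC^{\infty}(M,\g^{-i})$ (the quotient by $\p=\g^0\subset\g^{-i}$ imposes no extra constraint). Starting from the structure equation $\diff\omega=K-[\omega,\omega]_{\g}$ together with
$$\omega([X,Y])=X\cdot\omega(Y)-Y\cdot\omega(X)-\diff\omega(X,Y),$$
I would substitute to get
$$\omega([X,Y])=X\cdot\omega(Y)-Y\cdot\omega(X)+[\omega(X),\omega(Y)]_{\g}-K(X,Y).$$
Here $X\cdot\omega(Y)\in\g^{-j}$ and $Y\cdot\omega(X)\in\g^{-i}$, since differentiating a function valued in the fixed subspace $\g^{-j}$ (resp. $\g^{-i}$) keeps values in that subspace, while $[\omega(X),\omega(Y)]_{\g}\in[\g^{-i},\g^{-j}]\subset\g^{-i-j}$.

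For (i): as $i,j\geq 1$ we have $\g^{-i},\g^{-j}\subset\g^{-i-j}$, so every term on the right except $K(X,Y)$ already lies in $\g^{-i-j}$. Since $[X,Y]\in\Gamma(H^{-i-j})$ is equivalent to $\omega([X,Y])\in\CCC^{\infty}(M,\g^{-i-j})$, the bracket closes into $H^{-i-j}$ exactly when $K(X,Y)\in\g^{-i-j}$. As $K$ is tensorial, this is precisely the pointwise condition $K(H^{-i},H^{-j})\subset\g^{-i-j}$; combined with the two preceding lemmas (which control the brackets involving $H^0$), this yields the foliated Lie filtration.

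For (ii): I would rerun the computation one step finer, working modulo $\g^{-i-j+1}$. For $i\geq 1$ one has $\g^{-j}\subset\g^{-i-j+1}$ and for $j\geq 1$ one has $\g^{-i}\subset\g^{-i-j+1}$, so both derivative terms now vanish modulo $\g^{-i-j+1}$, leaving
$$\omega([X,Y])\equiv[\omega(X),\omega(Y)]_{\g}-K(X,Y)\pmod{\g^{-i-j+1}}.$$
The key identification is that the class of $[\omega(X),\omega(Y)]_{\g}$ in $\g_{-i-j}=\faktor{\g^{-i-j}}{\g^{-i-j+1}}$ is exactly the graded bracket of the corresponding classes in $\g_-$. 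Under the canonical vector-bundle isomorphism $\overline{\omega}\colon\ttt_{\sfrac{H}{H^0}}M\isomto M\times\g_-$ (identifying each $\faktor{H^{-i}}{H^{-i+1}}$ with $\g_{-i}$), the fibrewise bracket of $\ttt_{\sfrac{H}{H^0}}M$ is carried to the $\g_-$-bracket if and only if $K(X,Y)\in\g^{-i-j+1}$ for all such $X,Y$. Since $\g^{-i-j+1}\subset\g^{-i-j}$, this condition already implies the one in (i), so the Lie-filtration requirement of regularity is subsumed, and regularity holds iff $K(H^{-i},H^{-j})\subset\g^{-i-j+1}$ for all $i,j\geq 1$.

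The genuinely delicate points, and where I would be most careful, are twofold. First, verifying that $[\omega(X),\omega(Y)]_{\g}\bmod\g^{-i-j+1}$ really is the associated-graded bracket transported to $\g_-$: this uses both the gradedness $[\g_a,\g_b]\subset\g_{a+b}$ and the tensoriality of the induced bracket on $\ttt_{\sfrac{H}{H^0}}M$ in degrees $i,j\geq 1$. Second, the equivariance bookkeeping of the Definition: the map $\overline{\omega}$ is the only natural candidate for the isomorphism and its $P$-equivariance is inherited from axiom (iii), so the entire content of regularity beyond the bracket is automatic, and the substance is precisely the curvature estimate derived above.
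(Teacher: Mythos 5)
Your proof is correct and follows essentially the same route as the paper: expand $\omega([X,Y])$ via the structure equation, observe that the derivative terms and the algebraic bracket land in $\g^{-i-j}$ (respectively that the derivative terms land in $\g^{-i-j+1}$), and read off each equivalence from the remaining curvature term. Your added remarks on tensoriality of $K$, the role of the preceding lemmas for brackets with $H^0$, and the identification of the associated-graded bracket under $\overline{\omega}$ are sound elaborations of steps the paper leaves implicit.
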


\begin{proof}
Let $i,j \geq 1$, $X \in \Gamma(H^{-i}), Y \in \Gamma(H^{-j})$:
\begin{align*}
\omega([X,Y]) &= X \cdot \omega(Y) - Y\cdot\omega(X) - \diff\omega(X,Y) \\
              &= X \cdot \omega(Y) - Y\cdot\omega(X) - [\omega(X),\omega(Y)] +K(X,Y).
\end{align*}
We have that $X \cdot \omega(Y) \in \CCC^{\infty}(M,\g^{-j}) \subset \CCC^{\infty}(M,\g^{-i-j})$. We also have that $Y\cdot\omega(X) \in \CCC^{\infty}(M,\g^{-i-j})$. We also have $[\omega(X),\omega(Y)] \in \CCC^{\infty}(M,\g^{-i-j})$ and thus deduce the first statement.

For the second statement we keep the same notations. The only thing to show is that $\omega([X,Y]) = [\omega(X),\omega(Y)] \mod \g^{-i-j+1}$. Since $i,j \geq 1$ we have $\CCC^{\infty}(M,\g^{-j}) \subset \CCC^{\infty}(M,\g^{-i-j+1})$ so $X\cdot\omega(Y) \in \CCC^{\infty}(M,\g^{-i-j+1})$. The same goes for $Y\cdot \omega(X)$ and we have 
$$\omega([X,Y]) = [\omega(X),\omega(Y)] + K(X,Y) \mod \g^{-i-j+1},$$ 
hence the result.
\end{proof}

\section{The transverse complex}\label{TransverseComplex}

Let $(\mathbb{E},\rho)$ be a finite dimensional $\g$-representation, denote by $E \to M$ the associated trivial bundle and by $\nabla = \diff + \rho\circ\omega \in \Omega^1(M,E)$ the canonical tractor connection. We want to mimic the curved BGG sequences of \cite{CurvedBGGArticle,CurvedBGGBook}. In order to do so we need the identification of the normal bundle with $\faktor{\g}{\p}$ and thus replace the usual complex of differential forms by the one of transverse forms. Let $N \subset TM$ be a subbundle such that $\F \oplus N = TM$, such a choice has no incidence on what follows. The space of differential forms then splits up into a bicomplex 
$$\Lambda^{i,j}T^*M = \Lambda^i N^* \otimes \Lambda^j \F^* \cong \Lambda^i \left(\faktor{TM}{\F}\right)^* \otimes \Lambda^j \F^*.$$ 
The de Rham differential splits up accordingly: 
$$\diff^{\nabla} = \sum_{i+j = 1} \diff^{\nabla}_{i,j},$$ 
with $\diff^{\nabla}_{i,j}(\Omega^{k,l}(M,E))\subset \Omega^{k+i,l+j}(M,E)$. Since $\diff^{\nabla}$ is determined by its image on zero-forms and one-forms then only the following terms remain:
$$\diff^{\nabla} = \diff^{\nabla}_{2,-1} + \diff^{\nabla}_{1,0} + \diff^{\nabla}_{0,1}+ \diff^{\nabla}_{-1,2}.$$
We denote by $\diff^{\nabla}_N = \diff^{\nabla}_{1,0}$ the differential in the transverse directions, $\diff^{\nabla}_{\F} = \diff^{\nabla}_{0,1}$ the differential in the longitudinal ones. For a vector field $X \in \mathfrak{X}(M)$, write $X = X^N + X^{F}$ for the decomposition corresponding to $\mathfrak{X}(M) = \Gamma(N) \oplus \Gamma(\F)$.

\begin{Proposition}$\diff^{\nabla}_{-1,2} = 0$ and $\diff^{\nabla}_{2,-1}$ is the contraction by $(2,-1)$-form $\Theta \in \Gamma(M,\Lambda^2 N^* \otimes \F)$ given by $\Theta(X^N,Y^N) = -[X^N,Y^N]^F$. In particular $\diff^{\nabla} = \diff^{\nabla}_N + \diff^{\nabla}_{\F}$ if and only if $N$ is involutive.
\end{Proposition}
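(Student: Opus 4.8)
The plan is to compute directly from the Koszul formula for the covariant exterior derivative and bookkeep how each summand shifts the bidegree with respect to the splitting $TM = N \oplus \F$. For $\alpha \in \Omega^k(M,E)$ recall
\[
\diff^{\nabla}\alpha(X_0,\dots,X_k) = \sum_{a}(-1)^a \nabla_{X_a}\alpha(X_0,\dots,\hat{X}_a,\dots,X_k) + \sum_{a<b}(-1)^{a+b}\alpha([X_a,X_b],X_0,\dots,\hat{X}_a,\dots,\hat{X}_b,\dots,X_k).
\]
First I would observe that the connection (first) sum can only raise the bidegree by $(1,0)$ or $(0,1)$: differentiating along an argument $X_a \in \Gamma(N)$ feeds into $\diff^{\nabla}_{1,0}$, while $X_a \in \Gamma(\F)$ feeds into $\diff^{\nabla}_{0,1}$. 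Hence the two ``off-diagonal'' pieces $\diff^{\nabla}_{2,-1}$ and $\diff^{\nabla}_{-1,2}$ can only originate from the bracket (second) sum.

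Next I would analyse the bracket term by splitting each bracket into its $N$- and $\F$-components, $[X_a,X_b] = [X_a,X_b]^N + [X_a,X_b]^F$, and counting. Viewing a $(k,l)$-form as something nonzero only on $k$ arguments from $N$ and $l$ from $\F$, one checks that the $(-1,2)$ contribution arises exactly from a pair $X_a, X_b \in \Gamma(\F)$ together with the $N$-part $[X_a,X_b]^N$, while the $(2,-1)$ contribution arises exactly from a pair $X_a, X_b \in \Gamma(N)$ together with the $\F$-part $[X_a,X_b]^F$ (the mixed pairs and the same-type pairs paired with their complementary component landing in bidegrees $(0,1)$ or $(1,0)$). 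Since $\F$ is a foliation, $[X_a,X_b] \in \Gamma(\F)$ whenever $X_a, X_b \in \Gamma(\F)$, so $[X_a,X_b]^N = 0$ and therefore $\diff^{\nabla}_{-1,2} = 0$. The only surviving off-diagonal piece is $\diff^{\nabla}_{2,-1}$, and reading off the sign $(-1)^{a+b}$ from the Koszul formula identifies it as contraction by the $\F$-valued form $\Theta(X^N,Y^N) = -[X^N,Y^N]^F$.

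It remains to check that $\Theta$ is a genuine tensor and to draw the equivalence. For $f \in C^\infty(M)$ and $X,Y \in \Gamma(N)$ the Leibniz rule gives $[fX,Y]^F = f[X,Y]^F - (Y\cdot f)X^F = f[X,Y]^F$ because $X^F = 0$, and antisymmetry handles the second slot; thus $(X,Y)\mapsto [X,Y]^F$ is $C^\infty(M)$-bilinear and alternating, i.e.\ $\Theta \in \Gamma(\Lambda^2 N^* \otimes \F)$. Finally, having shown $\diff^{\nabla}_{-1,2} = 0$ unconditionally, the identity $\diff^{\nabla} = \diff^{\nabla}_N + \diff^{\nabla}_{\F}$ is equivalent to $\diff^{\nabla}_{2,-1} = 0$, i.e.\ to $\Theta = 0$, i.e.\ to $[X^N,Y^N]^F = 0$ for all vector fields, which is precisely the involutivity of $N$.

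I expect the only genuine subtlety to be the bidegree bookkeeping in the bracket sum: making sure each of the four types of pairs (both in $N$, both in $\F$, mixed, and in each case which component of the bracket is retained) is sent to the claimed bidegree, with correct signs. Once this accounting is in place, the two structural inputs do all the work, namely that integrability of $\F$ annihilates the $(-1,2)$ part, while the $(2,-1)$ part is manifestly the obstruction to integrability of $N$.
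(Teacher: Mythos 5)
Your argument is correct, and it is organized differently from the paper's. The paper first observes that $\diff^{\nabla}$ is a derivation determined by its values on zero- and one-forms, so each bigraded component only needs to be identified there: the $(-1,2)$ part is checked to vanish on $\Omega^{1,0}(M,E)$ using involutivity of $\F$ (and vanishes on $\Omega^{0,1}(M,E)$ for trivial degree reasons), while the $(2,-1)$ part is computed on $\Omega^{0,1}(M,E)$, where $\diff^{\nabla}\eta(X^N,Y^N)=-\eta([X^N,Y^N]^F)=\eta(\Theta(X^N,Y^N))$ falls out immediately. You instead run the full Koszul formula in arbitrary degree and track how each summand shifts the bidegree; your bookkeeping (connection terms shift by $(1,0)$ or $(0,1)$; among bracket terms, only a pair in $\Gamma(\F)$ retaining the $N$-component of the bracket lands in $(-1,2)$, and only a pair in $\Gamma(N)$ retaining the $\F$-component lands in $(2,-1)$) is accurate, and your tensoriality check for $\Theta$ and the final equivalence with involutivity of $N$ are fine. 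What the paper's reduction buys is that the identification of $\diff^{\nabla}_{2,-1}$ with $\iota_{\Theta}$ becomes a one-line computation on $(0,1)$-forms; in your version that identification in general bidegree, including the sign conventions for inserting the $\F$-valued $2$-form $\Theta$, is the one step you only sketch (``reading off the sign $(-1)^{a+b}$''), and writing it out in full is a routine but nontrivial Fr\"olicher--Nijenhuis-type verification. Conversely, your route is self-contained in that it does not invoke the fact that a (bi)graded derivation is determined by its action in degrees $0$ and $1$.
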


\begin{proof}
As mentioned before, we only need to study the case of zero and one-forms. Let $X,Y \in \mathfrak{X}(M)$ be vector fields on $M$. Let $s \in \Gamma(E)$, 
$$\dn s(X) =\dn s (X^N)+ \dn s (X^F)   = \dn_N s(X^N) + \dn_{\F}s(X^F).$$
Let $\eta \in \Omega^{1,0}(M,E)$, we view $\eta$ as a 1-form vanishing on $\F$. Then
\begin{align*}
\dn\eta(X,Y) &= \dn\eta(X^N,Y^N) + \dn\eta(X^N,Y^F) \\
			   & \ \  + \dn\eta(X^F,Y^N)+ \dn\eta(X^F,Y^F) \\
               &= \dn_N\eta(X^N,Y^N) + \dn_{\F}\eta(X^N,Y^F) \\
               & \ \ + \dn_{\F}\eta(X^F,Y^N) + \dn_{-1,2}\eta(X^F,Y^F)
\end{align*}
hence $\dn_{-1,2}\eta(X^F,Y^F) = \nabla_{X^F}\eta(Y^F) - \nabla_{Y^F}\eta(X^F) -\eta([X^F,Y^F]) = 0$ as $\F$ is involutive.\\
Since $\dn_{-1,2}$ vanishes automatically on $\Omega^{0,1}(M,E)$ we get the first part of the statement.\\
For the second part, notice that we necessarily have $\dn_{2,-1}\eta = 0$ on $\Omega^{1,0}(M,E)$. The same happens with the contraction by $\Theta$. Let $\eta \in \Omega^{0,1}(M,\E)$ i.e. a 1-form vanishing on $N$. We have 
\begin{align*}
\dn_{2,-1}\eta(X^N,Y^N) &= \dn\eta(X^N,Y^N) \\
                          &= \nabla_{X^N}\eta(Y^N) - \nabla_{Y^N} \eta(X^N) - \eta([X^N,Y^N]) \\
                          &= -\eta([X^N,Y^N]^N + [X^N,Y^N]^F) \\
                          &= \eta(\Theta(X^N,Y^N)).
\end{align*}
Hence the result.
\end{proof}

The object of study in the next sections will be the sequence of differential operators:
$$\xymatrix{\Gamma(E) \ar[r]^-{\dn_N} & \Omega^{1,0}(M,E) \ar[r]^-{\dn_N} & \cdots \ar[r]^-{\dn_N} & \Omega^{n,0}(M,E)}$$
As for the de Rham sequence of operators, this is not a chain complex, $(\dn_N)^2 \neq 0$. The definition of $\dn_N$ involves choosing a complementary bundle to $\F$ but the operator pulled back between the bundles $\Lambda^i \left(\faktor{TM}{H^0}\right)^* \otimes E$ does not depend on these choices. Denote by $R^{\nabla}$ the curvature form associated to $\nabla$, it is the image of $K$ by $\g \to \End(\E)$.

\begin{Corollary}\label{Curvature Indentities}We have:
\begin{align*}
(\dn_{\F})^2 &= 0, \\
(\dn_N)^2 &= R^{\nabla} - (\dn_F\iota_{\Theta} + \iota_{\Theta}\dn_F), \\
\iota_{\Theta}^2 &= 0
\end{align*}\end{Corollary}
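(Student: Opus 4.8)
The plan is to derive all three identities from a single master identity — the square of the tractor exterior derivative equals the curvature, $(\dn)^2 = R^{\nabla}$ (acting by exterior multiplication) — and then to extract the three formulas by sorting operators according to their bidegree. By the preceding proposition we have the decomposition $\dn = \dn_N + \dn_{\F} + \iota_{\Theta}$, since $\dn_{-1,2} = 0$ and $\dn_{2,-1} = \iota_{\Theta}$. Here $\dn_N$ has bidegree $(1,0)$, $\dn_{\F}$ has bidegree $(0,1)$, and $\iota_{\Theta}$ has bidegree $(2,-1)$. Squaring and grouping the resulting operators by bidegree gives
\begin{align*}
(\dn)^2 &= \underbrace{(\dn_N)^2 + \dn_{\F}\iota_{\Theta} + \iota_{\Theta}\dn_{\F}}_{(2,0)} + \underbrace{\dn_N\dn_{\F} + \dn_{\F}\dn_N}_{(1,1)} + \underbrace{(\dn_{\F})^2}_{(0,2)} \\
&\quad + \underbrace{\dn_N\iota_{\Theta} + \iota_{\Theta}\dn_N}_{(3,-1)} + \underbrace{(\iota_{\Theta})^2}_{(4,-2)}.
\end{align*}

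The crucial input is that $R^{\nabla}$ is concentrated in bidegree $(2,0)$. Indeed $R^{\nabla}$ is the image of the curvature $K$ under $\rho\colon \g \to \End(\E)$, so Lemma \ref{CurvBas} yields $\iota_X R^{\nabla} = 0$ for all $X \in \Gamma(\F)$; hence $R^{\nabla}$ vanishes as soon as one of its arguments is longitudinal, so its $(1,1)$ and $(0,2)$ components vanish and $R^{\nabla} = R^{\nabla}_{2,0}$.

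I would then match the two sides componentwise. The six summands above land in pairwise distinct subspaces $\Lambda^{i,j}T^*M$, which are direct summands of $\Lambda^{i+j}T^*M$, so each bidegree component of $(\dn)^2$ equals the corresponding component of $R^{\nabla}$. Since the $(0,2)$ and $(4,-2)$ components of $R^{\nabla}$ are zero, we obtain at once $(\dn_{\F})^2 = 0$ and $(\iota_{\Theta})^2 = 0$, while the bidegree-$(2,0)$ component reads $(\dn_N)^2 + \dn_{\F}\iota_{\Theta} + \iota_{\Theta}\dn_{\F} = R^{\nabla}$, i.e. the middle identity. (As a byproduct, the $(1,1)$ and $(3,-1)$ components give the extra relations $\dn_N\dn_{\F} + \dn_{\F}\dn_N = 0$ and $\dn_N\iota_{\Theta} + \iota_{\Theta}\dn_N = 0$, which are useful later.)

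The only nontrivial ingredients are the master identity $(\dn)^2 = R^{\nabla}$ — standard for any linear connection, the square of the exterior covariant derivative being exterior multiplication by the curvature — and the type constraint $R^{\nabla} = R^{\nabla}_{2,0}$, which is exactly Lemma \ref{CurvBas} transported through $\rho$. Everything else is elementary bidegree bookkeeping, so I expect the main (and only minor) obstacle to be presenting the componentwise matching cleanly, rather than any substantive computation.
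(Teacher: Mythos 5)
Your proposal is correct and follows essentially the same route as the paper: square the decomposition $\dn = \dn_N + \dn_{\F} + \iota_{\Theta}$, use $(\dn)^2 = R^{\nabla}$ together with Lemma \ref{CurvBas} to place $R^{\nabla}$ in bidegree $(2,0)$, and identify components bidegree by bidegree. The extra relations you record for the $(1,1)$ and $(3,-1)$ components are exactly the ones the paper also lists.
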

\begin{proof}
The proof is similar to the previous one. We square on both sides 
$$\dn = \dn_N+\dn_{\F} + \iota_{\Theta},$$
and identify the $(i,j)$-parts for $-4\leq i,j \leq 4, i+j=2$. The LHS is equal to $R^{\nabla}$. By Lemma \ref{CurvBas}, $R^{\nabla}$ is a form of bidegree $(2,0)$. Hence we get the following equalities:
\begin{itemize}
\item[(4,-2) :]$\iota_{\Theta}^2 = 0$
\item[(3,-1) :]$\dn_N\iota_{\Theta} + \iota_{\Theta}\dn_N = 0$
\item[(2,0) :]$(\dn_N)^2 + \dn_{\F}\iota_{\Theta} + \iota_{\Theta}\dn_{\F} = R^{\nabla}$
\item[(1,1) :]$\dn_{\F}\dn_N + \dn_N\dn_{\F} = 0$
\item[(0,2) :]$(\dn_{\F})^2 = 0$
\item[(-1,3) :]$0 = 0$
\item[(-2,4) :]$0 = 0$\qedhere
\end{itemize}
\end{proof} 

\begin{Remark}This sequence of operators can also be constructed for arbitrary foliated filtered manifold. The curvature is not concentrated in degree (2,0) in general however. Corollary \ref{Curvature Indentities} still holds with the (2,0)-component of the curvature replacing the whole curvature (and some other terms do not vanish anymore but they are not relevant for our study).\end{Remark}

\section{The holonomy groupoid of a foliation}\label{Holonomy}

The de Rham complex of a manifold with transverse parabolic geometry has no chance of being elliptic, even hypoelliptic, as it fails to capture what happens in the longitudinal directions. We actually have better chances to understand its index properties transversally to $\F$. In order to do so we need actions of the holonomy groupoid and the equivariance of the operators involved. The problem is that the holonomy groupoid only acts on the normal bundle to the foliation. To avoid this issue we will make sections of the algebroid act on the different bundles. For the $P$ part of the holonomy groupoid it can be reduced to a classical $P$-equivariance.

A Lie groupoid is given by the data of two manifolds \(G,M\), source and range maps \(r,s\colon G \to M\), a partially defined multiplication 
\[m\colon G^{(2)}= \{(g,h)\in G^{(2)}, s(g) = r(h)\} \to G,\] 
an involution (inversion map) \(\iota \colon G \to G\) and a unit map \(u \colon M \to G\). These maps satisfy the axioms of a category with objects \(M\) and arrows given by elements of \(G\), the element \(u(x), x \in M\) is the identity map of the object \(x\), \(s\) indicates the source object of a map, \(r\) its target  (so for instance \(r\circ u = \Id_M = s\circ u\))... We also ask these maps to be smooth, \(r,s,m\) to be submersions, \(\iota\) a diffeomorphism and \(u\) an embedding.

\begin{Examples}Here are some examples of Lie groupoids:
\begin{itemize}
\item Given a manifold \(M\) we can consider \(G = M\) and all the structure maps to be the identity.
\item A Lie group is a Lie groupoid over a point.
\item A smooth family of groups over a manifold \(\pi \colon G \to M\) is a groupoid with \(r = \pi = s\).
\item Given a manifold \(M\) we can consider the pair groupoid \(M \times M\) over \(M\). The source and range maps are the respective projections onto the second and first components. The unit map is the diagonal inclusion, the inversion swaps the two elements of a couple. Finally the composition law reads \(\forall x,y,z \in M, (x,y)\cdot (y,z) = (x,z)\).
\item The Poincaré groupoid of a manifold is the groupoid whose arrows from \(x\in M\) to \(y\in M\) is the set of homotopy classes of paths from \(x\) to \(y\) (with fixed ends). Composition is given by the concatenation of paths, the unit map by the trivial loops and the inversion is taking a path backwards.
\end{itemize}
\end{Examples}

Another example is the holonomy groupoid of a foliation. Let \((M,\feui)\) be a foliated manifold. Similarly to the Poincaré groupoid we can consider a groupoid over \(M\) whose arrows between \(x,y\in M\) are homotopy classes of paths tangent to the leaves between \(x\) and \(y\). In particular there is no arrow between points that are not on the same leaf. We can refine this groupoid to get something that only captures the transverse geometry of the foliation.
If a path from \(x\in M\) to \(y\in M\) is contained in a foliation chart, we can take local transversals around its ends. The trivialisation of the foliation in the chart and the path induce a diffeomorphism between the two transversals. Now given any path we can cover it with finitely many foliation charts. We can take points \(x=x_0, x_1, \cdots, x_m = y\) along the path, each one at the intersection of two following charts. Fixing small enough local transversals around these point, the path induces a diffeomorphism between two subsequent one of these. We can compose them to get a diffeomorphism between a local transversal around \(x\) and another one around \(y\). 
This diffeomorphism depends on the choices of charts and transversals but its germ doesn't. The same goes when we replace the path by a homotopically equivalent one. The germ of diffeomorphism along local transversals is called the holonomy of a path. We can quotient out paths having the same holonomy to obtain a new Lie groupoid over \(M\), the holonomy groupoid \(\ho(\feui)\).

This groupoid integrates the foliation in the sense that its Lie algebroid (the infinitesimal counterpart of Lie groupoids, in the same way that Lie algebras are the infinitesimal counterpart of Lie groups) is the foliation \(\feui\) with the Lie bracket of vector fields on its sections. The holonomy groupoid is also minimal in that sense, see \cite{debord2000groupoides}.

Since the algebroid is given by the foliation itself, it is easy to produce (local) bisections of this groupoid, i.e. maps \(\varphi \colon M \to \ho(\feui)\) with \(s\circ \varphi = \Id_M\) and \(r\circ \varphi\) is a local diffeomorphism. To this end, consider a local section \(X \in \Gamma_{loc}(M,\feui)\). Its flow at time \(1\) defines a local diffeomorphism of \(M\). Since \(\feui\) is a foliation, this flow preserves the leaves and define holonomies by the argument used above for paths. Therefore \(\exp(X)\) is a local bisection of \(\ho(\feui)\).

Finally the holonomy groupoids acts on the normal bundle of the foliation. Indeed if \(\gamma\in \ho(\feui)\) is an arrow between \(x\) and \(y\) then it represents a germ of diffeomorphims between local transversals around those points. Taking the differential of this germ at \(x\) gives a linear map \(\faktor{T_xM}{\feui_x}\to \faktor{T_yM}{\feui_y}\). These maps are compatible with the product and inversion law of \(\ho(\feui)\), this is a groupoid action \(\ho(\feui)\act \faktor{TM}{\feui}\) which preserves the vector space structure on the fibers.

\section{Invariant differential operators for the holonomy action}\label{Holonomy action}

\begin{Definition}Let $(M,\F)$ be a foliated manifold and $D \colon \Gamma(M,E) \to \Gamma(M,F)$ a differential operator between $\ho(\F)$-equivariant vector bundles $E,F$ over $M$. We say that $D$ is $\ho(\F)$-equivariant if for every (local) section $X \in \Gamma(M,\F)$ then:
$$\exp(X) \circ D = D \circ \exp(X).$$
\end{Definition}

\begin{Proposition}If $D \colon \Gamma(M,E) \to \Gamma(M,F)$ is $\ho(\F)$-equivariant then its principal transverse symbol (whether in the classical or filtered calculus if there is a filtration, see Section \ref{FilteredCalculus} below) is $\ho(\F)$-equivariant as a function on \(\feui^{\perp}\setminus\{0\}\) (or as a distribution on \(T_{\sfrac{H}{\feui}}M\) in the filtered case).
\end{Proposition}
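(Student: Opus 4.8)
The plan is to exploit the naturality (functoriality) of the principal symbol under local diffeomorphisms, combined with the fact established in Section \ref{Holonomy} that the holonomy action is generated by the local bisections $\exp(X)$ with $X\in\Gamma(M,\F)$.

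First I would record the naturality statement for the symbol. If $\phi$ is a local diffeomorphism of $M$ that preserves the foliation $\F$ (and, in the filtered setting, the filtration $(H^{-i})_{i\geq 0}$), then the conjugated operator $\phi\circ D\circ\phi^{-1}$ is again a differential operator between the transported bundles, and its principal transverse symbol is the image of $\sigma(D)$ under the action induced by $\phi$ on the conormal bundle $\feui^{\perp}\setminus\{0\}$ (respectively on the osculating groupoid $T_{\sfrac{H}{\feui}}M$ in the filtered case). This is the standard functorial behaviour of symbols under change of coordinates; in the filtered case it is the compatibility of the symbol, viewed as the convolution kernel of a $T_{\sfrac{H}{\feui}}M$-operator, with the Lie group bundle automorphism of $T_{\sfrac{H}{\feui}}M$ induced by $\phi$. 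This automorphism exists because, by the Lemma proved above, flows of sections of $H^0=\F$ preserve each $H^{-i}$ and hence descend to the associated graded $\ttt_{\sfrac{H}{\feui}}M$.

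Next I would apply this to $\phi=\exp(X)$ for an arbitrary local section $X\in\Gamma(M,\F)$. By the defining equivariance relation $\exp(X)\circ D=D\circ\exp(X)$ we have $\exp(X)\circ D\circ\exp(X)^{-1}=D$ as operators between the $\ho(\F)$-equivariant bundles $E,F$, where $\exp(X)$ acts on sections through the equivariant structures on $E$ and $F$. Applying the naturality step, the symbol of the left-hand side is the image of $\sigma(D)$ under the action of $\exp(X)$ on $\feui^{\perp}\setminus\{0\}$ (resp. on $T_{\sfrac{H}{\feui}}M$), while the right-hand side has symbol $\sigma(D)$; hence $\sigma(D)$ is invariant under this flow, keeping track of the coefficient bundle maps on both sides. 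Since these flows exhaust the local bisections generating the holonomy action on $\feui^{\perp}$ (resp. on the osculating groupoid), invariance under all of them is exactly the asserted $\ho(\F)$-equivariance of $\sigma(D)$.

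The main obstacle will be making the naturality step precise in the filtered case: there the symbol is not a function on $\feui^{\perp}$ but a distribution, so ``transport by $\exp(X)$'' must be read as the pushforward of convolution kernels under the automorphism of $T_{\sfrac{H}{\feui}}M$ induced by the flow. Verifying that the flow of $X\in\Gamma(\F)$ really induces such an automorphism rests on the relation $[\Gamma(H^{-i}),\Gamma(\F)]\subset\Gamma(H^{-i})$, which guarantees that the flow acts on each graded piece and therefore on the whole osculating Lie algebra bundle; the remaining verifications — compatibility with the Baker--Campbell--Hausdorff product and with the equivariant bundle maps on $E$ and $F$ — are routine once this automorphism is in hand.
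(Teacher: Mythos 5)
Your proposal is correct and follows essentially the same route as the paper: realize each holonomy arrow as $\exp(X)$ for a local section $X$ of $\F$, conjugate $D$ by this flow using the equivariance hypothesis, and pass to principal transverse symbols via their naturality under foliation- (and filtration-) preserving diffeomorphisms. The paper's proof is just a terser version of this argument, so no further comment is needed.
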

\begin{proof}
Let \(x,y\in M, \gamma \in \ho(\feui)\) an arrow from \(x\) to \(y\). We can choose a local vector field \(X\in \Gamma_{loc}(M,\feui)\) such that \(\exp(X)(x) = y\) and \(\exp(X)\) represents the germ \(\gamma\). Now using the equivariance of \(D\) and taking the principal symbol on both sides, we obtain \(\gamma\circ \sigma_x(D) = \sigma_y(D)\circ\gamma\). In the last equation the element \(\gamma\) is seen as acting on the normal bundle of the foliation (or its filtered analog in the filtered case).
\end{proof}

Recall from Section \ref{GeomSet} that $M$ has a transversally filtered structure induced by its transverse Cartan geometry. We will now assume that the transverse Cartan geometry is regular. It is proven in \cite{cren2022transverse} that the action $\ho(H^0) \act \faktor{TM}{H^0}$ can be refined to an action $\ho(H^0)\act T_{\sfrac{H}{H^0}}M$ preserving the groupoid structure of $T_{\sfrac{H}{H^0}}M$. Here we have decomposed the foliation into two parts $H^0 = \F = M \rtimes \p \oplus \ker(\omega)$. To understand the different actions of $\ho(H^0)$ in our case, we will thus need to understand the actions of $P$ and $\ho(\ker(\omega))$.\\
Let us fix a \(G\)-representation \((\mathbb E,\rho)\) and consider the corresponding tractor bundle \(E = M \times \mathbb{E} \to M\) as in Section \ref{TransverseComplex}. The holonomy groupoid acts on $E$ in the following manner: $P$ acts on $M$ on the right via $r$ and on $\E$ on the left via $\rho$. This endows $E$ with the diagonal action: $(x,e)\cdot h = (r_h(x),\rho(h^{-1})(e))$. Similarly, $\ho(\ker(\omega))$ acts naturally on $M$ and trivially on $\E$ and the action on $E$ is the diagonal one. Let $P$ act on $\g$ on the left by the restriction of the adjoint action. Then, $P$ acts on the trivial bundle $M \times \g$ by $(x,\xi)\cdot h = (r_h(x) , \ad(h^{-1})(\xi))$. It makes $\omega \colon TM \to M\times \g$ a $P$-equivariant morphism by axiom $(iii)$. We make $\ho(\ker(\omega))$ act trivially on $\g$ and thus obtain an action of $\ho(H^0)$ on $M \times \g$ (which extends to the associated bundles $M\times G, M \times \g^*,\cdots$).

\begin{Remark}The action of $\ho(\ker(\omega))$ on $E$ or $\End(E)$ is given by the pullback of sections: $s \cdot \exp(X) = s\circ \exp(X) = \exp(X)^*s$. In this last equation, the value at \(x\in M\) only depends on the holonomy germ induced by \(\exp(X)\) at \(x\). Therefore we have a well defined groupoid action \(\ho(\ker(\omega))\act E\). Combining with the natural \(P\) action we obtain \(\ho(H^0)\act E\).\end{Remark}

\begin{Lemma}$\overline{\omega}$ is  $\ho(H^0)$-equivariant, hence $\rho \circ \omega$ is $\ho(H^0)$ equivariant. \end{Lemma}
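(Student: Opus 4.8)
The plan is to establish the $\ho(H^0)$-equivariance of the \emph{full} Cartan form $\omega\colon TM\to M\times\g$; the two assertions of the Lemma then follow by descending to the quotient $\faktor{TM}{\F}$ and by post-composing with $\rho$. Since $H^0=\F=M\rtimes\p\oplus\ker(\omega)$ and the groupoid $\ho(H^0)$ is generated by the $P$-action (the bisections $r_h$, $h\in P$) together with the holonomies $\exp(X)$ of local sections $X\in\Gamma(\ker(\omega))$, it suffices to check equivariance separately for these two families, exactly the decomposition set up in Section~\ref{Holonomy action}. I would first record this reduction, observing that a local section of $H^0$ splits into a $\p$-part and a $\ker(\omega)$-part and that equivariance is a multiplicative condition compatible with the generation of the groupoid.

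For the $P$-part, axiom (iii), $r_h^*\omega=\Ad(h^{-1})\circ\omega$, is precisely the statement that $\omega$ intertwines the differential action $(dr_h)$ on $TM$ with the action $\xi\mapsto\Ad(h^{-1})\xi$ on $M\times\g$ fixed in Section~\ref{Holonomy action}. Since $r_h$ preserves $\F$ (axiom (i)), $\omega(\F)\subset\p$ (axiom (ii)), and $\Ad(P)$ preserves $\p$, this descends to $\overline\omega_{r_h(x)}\circ\overline{dr_h}=\Ad(h^{-1})\circ\overline\omega_x$ on $\faktor{TM}{\F}$, i.e. the $P$-equivariance of $\overline\omega$.

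For the $\ho(\ker(\omega))$-part, I would invoke axiom (vi): for $X\in\Gamma(\ker(\omega))$ one has $L_X\omega=0$, whence $\frac{\diff}{\diff t}(\exp(tX))^*\omega=(\exp(tX))^*L_X\omega=0$ and therefore $(\exp(X))^*\omega=\omega$. As $\ker(\omega)\subset\F$ and $\F$ is involutive, the flow $\exp(X)$ preserves $\F$ and so induces the holonomy action on $\faktor{TM}{\F}$; the identity $(\exp(X))^*\omega=\omega$ then descends to $\overline\omega_{y}\circ\gamma=\overline\omega_x$ for the holonomy germ $\gamma$ of $\exp(X)$ from $x$ to $y$, which is exactly equivariance for the trivial action of $\ho(\ker(\omega))$ on $M\times\faktor{\g}{\p}$. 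Combining the two cases yields the $\ho(H^0)$-equivariance of $\overline\omega$.

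The same two computations show $\omega$ itself is equivariant for the actions on $M\times\g$ ($\Ad(h^{-1})$ for $P$, trivial for $\ho(\ker(\omega))$). Post-composing with $\rho$ and using $\rho(\Ad(h^{-1})\xi)=\rho(h^{-1})\rho(\xi)\rho(h)$ — the compatibility of $\rho$ with the $P$-action $e\mapsto\rho(h^{-1})e$ on $E$ and the induced conjugation action on $\End(E)$ — gives the $\ho(H^0)$-equivariance of $\rho\circ\omega$. I expect the main obstacle to be the reduction step: justifying rigorously that equivariance under the two generating families implies equivariance under all of $\ho(H^0)$, since the holonomy of the flow of a sum $X_\p+X_{\ker(\omega)}$ is not the product of the individual holonomies. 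One should argue via a Trotter-type approximation together with the continuity and multiplicativity of the equivariance condition, or verify the intertwining directly on holonomy germs of paths tangent to $H^0$.
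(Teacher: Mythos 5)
Your argument is correct and follows essentially the same route as the paper: $P$-equivariance is read off from axiom (iii), and for $X\in\Gamma(\ker(\omega))$ one integrates $L_X\omega=0$ along the flow to get $\exp(X)^*\omega=\omega$, then descends to $\overline{\omega}$ and post-composes with $\rho$. The reduction to the two generating families (the $P$-bisections and the flows of sections of $\ker(\omega)$), which you flag as the main remaining obstacle, is precisely the decomposition the paper itself uses without further comment, so no genuinely different idea is involved.
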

\begin{proof}
The equivariance for the $P$ action is clear from the previous paragraph. We now need to prove the equivariance for elements of the form $\exp(X)$ with $X\in \Gamma(\ker(\omega))$.
Let $X$ be a section of $\ker(\omega)$, $t\mapsto \gamma_t = \exp(tX)$ the associated flow. Since $\gamma_{t*}X \in \Gamma(\ker(\omega))$ then: \(\frac{\diff}{\diff t}\gamma_t^*\omega = \gamma_t^*\left(L_{\gamma_{t*}X}\omega\right) = 0\). Therefore $\exp(X)^*\bar{\omega} = \exp(0)^*\bar{\omega} = \bar{\omega}$. The equivariance then follows from the action of $\ho(\ker(\omega))$ on the fibers of $E$ being trivial.
\end{proof}

\begin{Proposition}$\nabla$ is a $\ho(H^0)$-equivariant connection\end{Proposition}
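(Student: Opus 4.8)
$\nabla$ is a $\ho(H^0)$-equivariant connection.

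The plan is to reduce the equivariance of $\nabla$ to the equivariance of its two constituent pieces in the formula $\nabla = \diff + \rho\circ\omega$, exploiting the two actions assembled in Section~\ref{Holonomy action}. Recall that $\ho(H^0)$ is generated, near any arrow, by the $P$-action together with elements of the form $\exp(X)$ with $X\in\Gamma(\ker(\omega))$, so it suffices to check equivariance separately for $P$ and for such flows. The second summand $\rho\circ\omega$ has already been shown to be $\ho(H^0)$-equivariant in the Lemma immediately preceding this statement, so the content of the proof is entirely the equivariance of the de Rham piece $\diff$.

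First I would make the statement precise: a connection $\nabla$ is $\ho(H^0)$-equivariant in the sense that for every local section $X$ of $H^0=\F$, the flow $\exp(X)$ intertwines $\nabla$ with itself, i.e. $\exp(X)^*(\nabla s) = \nabla(\exp(X)^* s)$, and similarly $r_h^*(\nabla s) = \nabla(r_h^*s)$ for $h\in P$ (where pullbacks are taken with respect to the combined actions on $M$ and on the fibre $\E$ described in Section~\ref{Holonomy action}). For the flat connection $\diff$ on the \emph{trivial} bundle $E = M\times\E$, equivariance for $\ho(\ker(\omega))$ is automatic because this part of the groupoid acts trivially on the fibre $\E$: pulling back a section along $\exp(X)$ is just precomposition, and $\diff$ commutes with smooth pullback of $\E$-valued functions by the naturality of the exterior derivative. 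For the $P$-action the fibrewise action is by the constant linear map $\rho(h^{-1})$, which again commutes with $\diff$ since it does not depend on the base point; so $\diff$ is $\ho(H^0)$-equivariant as well.

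Combining the two pieces then gives the result: for $X\in\Gamma(\ker(\omega))$ and a section $s$,
\[
\nabla(\exp(X)^*s) = \diff(\exp(X)^*s) + (\rho\circ\omega)\,(\exp(X)^*s) = \exp(X)^*(\diff s) + \exp(X)^*\bigl((\rho\circ\omega)s\bigr) = \exp(X)^*(\nabla s),
\]
where the middle equality uses the equivariance of $\diff$ together with the equivariance of $\rho\circ\omega$ from the previous Lemma, and the analogous computation handles $h\in P$.

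I expect the only real subtlety to be bookkeeping rather than mathematics: one must be careful that the two factors of the diagonal action on $E$ are used consistently (the base-point movement versus the fibre transformation) and that the equivariance of $\rho\circ\omega$ is invoked as an equivariance of a \emph{bundle endomorphism-valued $1$-form}, so that it pairs correctly with the pulled-back section. Once the conventions for the combined action on $\Omega^\bullet(M,E)$ are fixed, the verification is the short computation above, and no genuine obstacle arises.
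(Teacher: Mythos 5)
Your proof is correct and follows essentially the same route as the paper: both decompose the $\ho(H^0)$-action into the $P$-part and the $\ho(\ker(\omega))$-part, check that $\diff$ commutes with each (the $P$-action because the fibrewise map $\rho(h^{-1})$ is constant, the $\ho(\ker(\omega))$-action because it is a pullback), and then invoke the preceding lemma on the equivariance of $\rho\circ\omega$ to conclude for $\nabla = \diff + \rho\circ\omega$.
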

\begin{proof}
We consider $\nabla \colon \Gamma(E) \to \Gamma(T^*M\otimes E)$. The action on the latter space of sections is the following: for elements $h$ of $P$ we have $\eta \cdot h = \rho(h^{-1})r_h^*\eta$, for elements $\exp(X)$ with $X \in \Gamma(\ker(\omega))$ we have $\eta\cdot \exp(X) = \exp(X)^*\eta$.
\paragraph*{First case: the $P$-action.}

Let $h \in P$, we have $s\cdot h = \rho(h^{-1}) r_h^*s$. Since the $P$-action is constant on the fibers of $E$ we have:
\begin{align*}
\diff(s\cdot h) &= \diff (\rho(h^{-1}) r_h^*s ) \\
          &= \rho(h^{-1}) (\diff r_h^* s) \\
          &= \rho(h^{-1}) r_h^*\diff s \\
          &= (\diff s)\cdot h.
\end{align*}
The equivariance of $\nabla$ with respect to the $P$-action is thus a consequence of the previous lemma.
\paragraph*{Second case: the $\ho(\ker(\omega))$-action.}

The action of a section of $\ker(\omega)$ is given by the pullback of sections and thus commutes with $\diff$. The equivariance of $\nabla$ is again a consequence of the lemma.
\end{proof}

\begin{Corollary}\label{Equivariance}The differential operators
$$\dn_N \colon \Omega^{i,0}(M,E) \to \Omega^{i+1,0}(M,E), i\geq 0,$$ 
are $\ho(H^0)$-equivariant.\end{Corollary}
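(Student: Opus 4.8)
The plan is to deduce the $\ho(H^0)$-equivariance of the operators $\dn_N$ directly from the equivariance of the connection $\nabla$ established in the previous proposition, together with the fact that the decomposition $\dn = \dn_N + \dn_{\F} + \iota_{\Theta}$ is itself respected by the holonomy action. The key structural input is that $\dn^{\nabla}$, being built from $\nabla$ alone, is $\ho(H^0)$-equivariant on the full complex $\Omega^{\bullet}(M,E)$; this follows formally from the previous proposition since the exterior covariant derivative is determined by $\nabla$ and the de Rham differential, both of which commute with pullbacks along (local) flows of sections of $\F = H^0$ and are $P$-equivariant.

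The main step is then to check that each bigraded component $\dn_{i,j}$ is \emph{separately} equivariant. First I would observe that the holonomy action of $\ho(H^0)$ preserves the bigrading $\Lambda^{i,j}T^*M = \Lambda^i(\faktor{TM}{\F})^* \otimes \Lambda^j \F^*$: indeed the action comes from flows of sections $X \in \Gamma(\F)$, whose differentials preserve the subbundle $\F \subset TM$ (since $\F$ is involutive and $P$-invariant by axioms (i) and (iii)), hence preserve the splitting at the level of $\faktor{TM}{\F}$ on which the action is genuinely defined. Because the action is by bigraded automorphisms, the equation $\dn \circ g = g \circ \dn$ splits into its $(i,j)$-homogeneous pieces, and comparing the $(1,0)$-components yields $\dn_N \circ g = g \circ \dn_N$, which is exactly the desired equivariance.

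The point requiring care — and the step I expect to be the main obstacle — is that $\dn_N$ was defined using a choice of complement $N$ with $\F \oplus N = TM$, and this $N$ need \emph{not} be $\ho(H^0)$-invariant. The resolution, already flagged in the text after the Proposition of Section \ref{TransverseComplex}, is that the operator descends to a well-defined operator on the quotient bundles $\Lambda^i(\faktor{TM}{H^0})^* \otimes E$ independently of the choice of $N$; so I would phrase the equivariance intrinsically on these quotient bundles, where the holonomy action is canonically defined via its action on $\faktor{TM}{H^0} = \faktor{TM}{\F}$ and on $E$. Concretely, for a local section $X \in \Gamma(H^0)$ one checks that $\exp(X)^* \dn_N = \dn_N \exp(X)^*$ by writing $\dn_N$ as the $(1,0)$-part of $\dn^{\nabla}$, using that $\exp(X)^*$ commutes with $\dn^{\nabla}$ (the preceding proposition applied to the full exterior derivative) and preserves bidegree. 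For the $P$-part the argument is identical with $\exp(X)^*$ replaced by $\rho(h^{-1}) r_h^*$, using $P$-invariance of $\F$. Once the independence from $N$ is invoked this reduces to the bookkeeping of the two cases already treated for $\nabla$, so no genuinely new calculation is needed beyond verifying that the bigrading is preserved.
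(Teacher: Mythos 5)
Your first two steps (equivariance of the full operator $\dn$ on $\Omega^{\bullet}(M,E)$, deduced from the Proposition on $\nabla$) are fine, but the pivotal claim that ``the action is by bigraded automorphisms'' does not hold, and this is where the argument breaks. The flow of a section of $\F$ (and likewise $r_h$ for $h\in P$) preserves $\F$ but has no reason to preserve the chosen complement $N$; consequently it preserves the subbundles $\Lambda^{\bullet,0}$ and, more generally, the filtration of $\Lambda^{\bullet}T^*M$ by the number of transverse factors, but \emph{not} the complementary summands $\Lambda^{i,j}$ with $j\geq 1$. Concretely, for $\eta\in\Omega^{i,0}(M,E)$ one has $\dn\eta=\dn_N\eta+\dn_{\F}\eta$, and applying $g^*$ to the relation $g^*\dn\eta=\dn g^*\eta$ and projecting onto $\Omega^{i+1,0}$ yields
\begin{equation*}
\dn_N(g^*\eta)-g^*(\dn_N\eta)=\bigl(g^*(\dn_{\F}\eta)\bigr)^{(i+1,0)},
\end{equation*}
because $g^*$ maps $\Omega^{i,1}$ only into $\Omega^{i+1,0}\oplus\Omega^{i,1}$. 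The right-hand side is a covariant derivative $\nabla_Z$ with $Z\in\Gamma(H^0)$ (for $i=0$ it is $\nabla_{(g_*(Y^N))^{\F}}s$), and it does not vanish for a general section unless $g_*$ happens to preserve $N$. Your proposed repair --- invoking the independence of $\dn_N$ from the choice of $N$ --- does not close this gap, because that independence suffers from exactly the same defect: two choices of complement give operators differing by leafwise covariant derivatives $\nabla_Z$, $Z\in\Gamma(H^0)$, so it too holds only modulo such terms.

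What your argument does establish is the statement at the level where it is actually consumed by the subsequent Rockland analysis: the offending terms $\nabla_Z$ with $Z\in\Gamma(H^0)$ have vanishing transverse principal symbol (since $\int_{H^0}\tilde{\sigma}(\nabla_Z)=(Z\bmod H^0)\otimes\operatorname{Id}=0$), so the transverse symbol of $\dn_N$ is both independent of $N$ and $\ho(H^0)$-equivariant. To obtain the operator identity $\exp(X)\circ\dn_N=\dn_N\circ\exp(X)$ literally, you would need either a holonomy- and $P$-invariant complement $N$, or an argument specific to the transverse parabolic structure showing that the cross-term $\bigl(g^*(\dn_{\F}\eta)\bigr)^{(i+1,0)}$ vanishes; neither is supplied, and the paper itself offers no proof of this Corollary, so you should at minimum isolate this as the point requiring justification rather than treating it as bookkeeping.
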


\section{Pseudodifferential calculus on filtered manifolds}\label{FilteredCalculus}

In this section we review the ideas behind calculus on filtered manifold. We don't really need the whole pseudodifferential calculus for what follows as we mainly use differential operators. It is good to have in mind that there is a bigger class of operators than the differential operators. Indeed the Rockland condition (or ellipticity for the usual (pseudo)differential calculus) serves as an invertibility criterion for the principal symbol. The order of a product of symbols is the sum of the orders. The inverse of the symbol of a differential operator should thus have non-positive order. It therefore cannot be the symbol of a differential operator who has non-negative order (unless both have order \(0\), which means they are functions). This section won't contain any proof. We refer the reader to the papers \cite{vanErpYuncken,dave2017graded,cren2022transverse} where these constructions are carried out with more details.

Let \(((M,(H^{-1},\cdots,H^{-r})))\) be a filtered manifold (we don't need any foliation for the moment). We consider vector fields that are sections of \(H^{-k}\) as (filtered) differential operators of order \(k\). This extends to a new definition of order for any differential operator. The condition on Lie brackets of vector fields for filtered manifolds ensures that the order of a product is (at most) the sum of the orders.

Taking a differential operator \(D\) of order \(k\) and a point \(x\in M\), we can freeze the coefficients of \(D\) at the point \(x\) and only keep the part of order \(k\) (for the new filtered order). This gives a equivariant differential operator on the group \(T_{H,x}M\) (remember that the osculating groupoid \(T_HM\) constructed in Section \ref{GeomSet} is a family of groups). We can equivalently see this element as part of the universal enveloping algebra of \(\mathcal{U}(\ttt_{H,x}M)\). The collection \((D_x)_{x\in M}\) of such operators is called its principal symbol in the filtered calculus and gives a section of the universal enveloping algebroid \(\mathcal{U}(\ttt_HM) = \bigsqcup_{x\in M}\mathcal{U}(\ttt_{H,x}M)\).

Let us denote by \(\Diff_H(M)\) the algebra of differential operators filtered by the order given by the filtration. The principal symbol map \(\sigma \colon \Diff_H(M) \to \Gamma(M,\mathcal{U}(\ttt_HM))\) is a morphism of filtered algebras\footnote{We will sometimes write \(\sigma^k\) for operators of order \(k\). The map \(\sigma^k\) vanishes on symbols of lower order.}. The filtration on the latter space comes from a grading on \(\mathcal{U}(\ttt_HM)\). This map is isomorphic on the associated gradings. This comes from the following exact sequences for \(k\in \mathbb{N}\):
\[\xymatrix{0 \ar[r] & \Diff^{k-1}_H(M) \ar[r] & \Diff^k_H(M) \ar[r]^-{\sigma^k} & \Gamma(M,\mathcal{U}(\ttt_HM)_{-k}) \ar[r] & 0.}\]
Here \(\mathcal{U}(\ttt_HM)_{-k}\) denotes the subspace of elements of degree \(k\). It is spanned, at a given \(x\in M\), by products of the form \(X_1\cdots X_j\) with \(X_{\alpha} \in \faktor{H_x^{-\ell_{\alpha}}}{H_x^{-\ell_{\alpha}+1}} \subset \ttt_{H,x}M\) with \(\sum_{\alpha = 1}^j \ell_{\alpha} = k\).

\begin{Example}
    When the filtration is trivial (\(H^{-1} = TM\)) we recover the usual notion of principal symbol by taking a fiberwise Fourier transform.
\end{Example}

More generally, we can allow for larger classes of symbols by taking all families of distributions on the groups \(T_{H,x}M, x \in M\) that are quasi-invariant for the inhomogeneous dilations \((\delta_{\lambda})_{\lambda>0}\) on the fibers (see e.g. \cite{vanErpYuncken}). These dilations can be defined on the Lie algebras first. For \(\lambda > 0\), let
\[\delta_{\lambda} \colon \ttt_HM \to \ttt_HM, \ \delta_{\lambda \left|\sfrac{H^{-k}}{H^{-k+1}}\right. } = \lambda^k \Id.\]
These dilations preserve the Lie algebra structure on each fiber and can thus be lifted to Lie groups automorphisms on each fiber (i.e. groupoid automorphisms for \(T_HM\)).
The principal symbol of a differential operator of order \(k\) in the filtered calculus is homogeneous of order \(k\) for these dilations. The more general symbols are not homogeneous on the nose. To consider the principal part of such symbols however one needs to consider its equivalence class modulo \(\CCC^{\infty}_c(T_HM)\). These equivalence classes are homogeneous for the inhomogeneous dilations acting on the quotient. We denote by \(\Sigma^k(T_HM)\) the space of (equivalence classes of) such distributions. These distribution arise as principal symbols of new operators: the pseudodifferential operators in the filtered calculus. They can be obtained by "exponentiating" a symbol to an operator on \(\CCC^{\infty}(M)\), or characterized using a variant of the tangent groupoid, following \cite{vanErpYuncken}. We denote by \(\Psi^m_H(M)\) the space of pseudodifferential operators of order \(m\) (not necessarily an integer anymore) in the filtered calculus.

We now describe the Rockland condition. Given an operator \(P \in \Psi_H^m(M)\), we consider its principal symbol \(\sigma^m(P) \in \Sigma^m(T_HM)\). For a point \(x\in M\) and a (non-trivial) irreducible, unitary representation \(\pi \in \widehat{T_{H,x}M}\setminus\{1\}\), we can form an operator \(\diff\pi(\sigma^m(P)) \colon \mathcal{H}^{\infty}_{\pi} \to \mathcal{H}^{\infty}_{\pi}\). Here \(\mathcal{H}^{\infty}_{\pi} \subset \mathcal{H}_{\pi}\) is the subset of smooth vectors in the Hilbert space underlying the representation. The operator \(P\) satisfies the Rockland condition if all these operators (for any given \(x,\pi\) as above) are left invertible. This is equivalent to the existence of a symbol \(\eta \in \Sigma^{-m}(T_HM)\) such that \(\eta \sigma^m(P) = 1\). From this symbol \(\eta\), one can then construct a (left-)parametrix for \(P\) in the filtered calculus (a left inverse modulo regularising operators), see \cite{dave2017graded}.

We finally summarize the ideas of our previous work \cite{cren2022transverse}. Given a foliated filtered manifold \((M,H^0\subset \cdots \subset H^{-r} =TM)\), we can consider the osculating groupoid \(T_HM\) but also its transverse analog \(T_{\sfrac{H}{H^0}}M\) as described in Section \ref{GeomSet}. The quotient map \(T_HM \to T_{\sfrac{H}{H^0}}M\) induces, by push-forward of distributions, maps:
\[\int_{H^0} \colon \Sigma^{\bullet}(T_HM) \to \Sigma^{\bullet}(T_{\sfrac{H}{H^0}}M).\]
Here the spaces \(\Sigma^{\bullet}(T_{\sfrac{H}{H^0}}M)\) are constructed in a similar fashion to \(\Sigma^{\bullet}(T_HM)\). Indeed this construction only used the fact that \(T_HM\) was a smooth family of graded nilpotent groups. Therefore we just need to consider quasi-invariant distributions on the corresponding groupoid \(T_{\sfrac{H}{H^0}}M\).

Without filtrations, and after taking a Fourier transform on both sides, this map corresponds to the restriction map \(\CCC^{\infty}(T^*M\setminus 0) \to \CCC^{\infty}((H^0)^{\perp} \setminus 0)\).
Using the non-trivial irreducible unitary representations of \(T_{\sfrac{H}{H^0}}M\), we can define a transversal Rockland condition. We showed in \cite{cren2022transverse} that operators satisfying this transversally Rockland condition represent abstract elliptic operators on the space of leaves, in the sense that they induce a \(K\)-homology class of the (maximal) \(C^*\)-algebra of the holonomy groupoid of the foliation \(H^0\).

All these constructions can be done when adding vector bundles over \(M\). For the de Rham type operators introduced above and the BGG-type operators that we want to define, the vector bundles involved are tractor bundles. They therefore are themselves filtered. Building on \cite{dave2017graded}, we modify a second time the notion of order to take into account these other filtrations. This leads to a transverse graded Rockland condition in Section \ref{Graded order}. We also adapt this condition to treat, not only operators but sequences of operators as well in Section \ref{Rockland Sequences}.

\section{Graded order in the filtered calculus}\label{Graded order}

In this section we combine the results of \cite{cren2022transverse} with the setting of graded filtered calculus of \cite{dave2017graded}. A \(\g\)-module $\E$ is filtered if it admits a (finite) filtration $\cdots \supseteq \E^{j-1}\supseteq \E^{j} \supseteq \cdots$ into subspaces such that $\g_i\E^j \subset \E^{i+j}$. This is always the case if $\E$ is a finite dimensional $\g$-module for a $|k|$-graded Lie algebra $\g$. Indeed there is then an element $X_0 \in \g_0$ such that $\ad(X_0)_{|\g_i} = i\Id_{\g_i}$, diagonalizing the image of $X_0$ gives the decomposition of $\E$. This filtration of \(E\) makes it a filtered \(\g\)-module.
If a vector space is filtered, we denote by $\gr(\E)$ its associated graded vector space, i.e. $\gr(\E) = \bigoplus_p \gr_p(\E)$ with $\gr_p(\E) = \faktor{\E^p}{\E^{p+1}}$. The graded vector space is naturally filtered and $\gr(\gr(\E)) = \gr(\E)$. If $\E$ and $\mathbb{F}$ are filtered and $f\colon \E \to \mathbb{F}$ preserves the filtration then there is a naturally defined linear map $\gr(f) \colon \gr(\E) \mapsto\gr(\mathbb{F})$. A splitting of $\E$ is a filtration preserving isomorphism of vector spaces $\gr(\E) \isomto \E$ whose associated graded homomorphism is the identity. In the case of a $\g$-module as above, the subalgebra $\g_-$ acts on $\gr(\E)$ and all the morphisms above preserve the $\g_-$-module structure. All these notions transpose \textit{mutati mutandis} to vector bundles over a topological space.

During this section $M$ denotes a filtered manifold with 
$$H^{-1} \subset \cdots \subset H^{-r} = TM$$
and $E,F$ vector bundles over $M$, graded in the previous sense. A pseudodifferential operator of graded filtered order $m$ is an operator $P \colon \Gamma(E) \to \Gamma(F)$ such that for any splittings $S_E,S_F$ of $E$ and $F$ we have\footnote{For differential operator, replace $\Psi^m_H$ by the differential operators of order $m$ in the filtered calculus.}:
$$\forall p,q, (S_F^{-1}PS_E)_{q,p} \in \Psi^{m+q-p}_H(M;\gr_p(E),\gr_q(F)).$$
It is actually enough to show this for a single choice of splittings. We denote by $\tilde{\Psi}^m_H(M;E,F)$ the space of such operators. We can then define their principal symbol as:
$$\tilde{\sigma}^m(P) = \sum_{p,q}\sigma^{m+q-p}(S_F^{-1}PS_E) \in \bigoplus_{p,q}\Sigma^{m+q-p}(T_HM;\gr_p(E),\gr_q(F)),$$
the later space will be denoted $\widetilde{\Sigma}^m(T_HM;\gr(E),\gr(F))$. This symbol does not depend on the choice of splitting. Since $\gr(E)$ and $\gr(F)$ are graded they are endowed with a family of inhomogeneous dilations in the same way $T_HM$ is. We denote them by $\delta_{\lambda}^E$ and $\delta_{\lambda}^F$ respectively, $\lambda > 0$. The principal symbols of graded order $m$ are then equivalence classes of kernels $k\mod \Gamma^{\infty}_c(T_HM, \pi^*(\gr(E)^*\otimes \gr(F)))$ (with \(\pi\colon T_HM \to M\)), that satisfy the homogeneity condition:
$$\forall \lambda> 0, \delta_{\lambda *}k \circ \delta_{\lambda}^E = \lambda^m \delta_{\lambda}^F \circ k \mod \Gamma^{\infty}_c(T_HM, \pi^*(\gr(E)^*\otimes \gr(F))).$$

For differential operators, recall that the filtration of $TM$ induced a symbol map $\Diff_H(M) \to \mathcal{U}(\gt_HM)$ which was filtration preserving. In regard of this new notion of order, when filtered vector bundles are involved, one should consider $\mathcal{U}(\gt_HM) \otimes \hom(\gr(E),\gr(F))$ instead, with the graduation given by the graded tensor product, i.e. its $k$-th stratum is:
$$\bigoplus_{p,q} \mathcal{U}(\gt_HM)_{-k+q-p} \otimes \hom(\gr_p(E),\gr_q(F)).$$

It is shown in \cite{dave2017graded} that this calculus satisfies all the usual properties of a pseudodifferential calculus (one basically goes back to the same properties for the usual filtered calculus). In particular they construct an appropriate Sobolev scale and get continuity results for operators in this calculus.

An operator $P \in \widetilde{\Psi}_H^m(M;E,F)$ is said to be graded Rockland if for every $x \in M$ and every non-trivial irreducible unitary representation of the fiber $\pi \in \widehat{T_{H,x}M}\setminus \{1\}$, the operator
$$\diff \pi (\tilde{\sigma}^m(P)) \colon \mathcal{H}_{\pi}^{\infty}\otimes \gr(E_x) \to \mathcal{H}_{\pi}^{\infty}\otimes \gr(F_x)$$
is left injective.

Finally for differential operators, the notion of graded order is simpler. Indeed, let $D \colon \Gamma(M,E) \to \Gamma(M,F)$ be a differential operator of graded filtered order $m$. Since there are no differential operator of negative order then $(S_F^{-1}DS_E)_{q,p} = 0$ whenever $m+q-p<0$. Consequently $D$ maps $\Gamma(M,E^p)$ to $\Gamma(M,F^{p-m})$ and we get an associated graded operator
$$\gr_k(D) \colon \Gamma(M,\gr_{\bullet}(E)) \to \Gamma(M,\gr_{\bullet -m}(F)).$$
Moreover this operator is tensorial and its corresponding vector bundle homomorphism is the direct sum $\bigoplus_p \tilde{\sigma}^m(D)_{p-m,p}$.
In particular a differential operator of graded order $0$ preserves the filtration and we will denote by $\widetilde{D}\colon \gr(E)\to \gr(F)$ the homomorphism constructed from the symbol. In the foliated case, when writing $\gr$ for bundles involving $TM$, we will consider $H^0$ as part of $H^1$ so that the associated graded is a bundle of nilpotent Lie algebras\footnote{See \cite{cren2022transverse}, if we consider $H^0$ in the grading process, the groupoid becomes $\ho(H^0)\ltimes T_{\sfrac{H}{H^0}}M$.}, i.e. $\gr(TM) = \gt_HM$.

Now if $M$ has a foliated filtration, all those notions from \cite{dave2017graded} carry out to transversal symbols and we can define the classes of transverse principal symbols $\widetilde{\Sigma}^m(T_{\sfrac{H}{H^0}}M;\gr(E),\gr(F))$. We also get as in \cite[Section~3.2]{cren2022transverse} restriction maps:
$$\int_{H^0} \colon \widetilde{\Sigma}^m(T_HM;\gr(E),\gr(F)) \to \widetilde{\Sigma}^m(T_{\sfrac{H}{H^0}}M;\gr(E),\gr(F))$$
that are compatible with the product of symbols.

\begin{Definition}An operator $P \in \tilde{\Psi}^m_H(M;E,F)$ is transversally graded Rockland if for every $x \in M$ and $\pi \in \widehat{T_{\sfrac{H}{H^0}}M}\setminus\{1\}$ 
$$\diff\pi\left(\int_{H^0}\tilde{\sigma}_x^m(P)\right) \colon \mathcal{H}_{\pi}^{\infty} \otimes \gr(E_x) \to \mathcal{H}_{\pi}^{\infty} \otimes \gr(F_x),$$
is injective.
\end{Definition}

\section{Graded (transversal) Rockland sequences}\label{Rockland Sequences}

The differential operators we want to analyse are not single operators but a sequence of them. We need to replace the Rockland condition on operators by a condition on sequences of operators. This was done in \cite{dave2017graded}, in the same way elliptic complexes are defined for the usual pseudodifferential calculus (see \cite{AtiyahBott}). Let us consider a sequence of vector bundles $E_i\to M$ and pseudodifferential operators $A_i$ of order $k_i$ in the filtered calculus:
$$\xymatrix{\cdots \ar[r]^-{A_{i-2}} & \Gamma(M,E_{i-1}) \ar[r]^-{A_{i-1}} & \Gamma(M,E_i) \ar[r]^-{A_i} & \Gamma(M,E_{i+1}) \ar[r]^-{A_{i+1}} & \cdots.}$$
This sequence is a Rockland sequence if for every $x \in M$ and every non-trivial irreducible unitary representation $\pi \in \widehat{T_{H,x}M}\setminus\{1\}$ the sequence:

$$\xymatrix{\cdots \ar[r] & \mathcal{H}_{\pi}^{\infty}\otimes E_{i,x} \ar[rr]^-{\diff\pi(\sigma^{k_{i}}_x(A_{i}))} && \mathcal{H}_{\pi}^{\infty}\otimes E_{i+1,x} \ar[r] & \cdots}$$
is weakly exact. This means that the image of an arrow is contained and dense in the kernel of the next one. We can replace the Rockland condition with the graded one, replacing the condition on the $\sigma^{k_i}(A_i)$ by the same condition of weak exactness for the sequence of operators: 
$$\xymatrix{\cdots \ar[r] & \mathcal{H}_{\pi}^{\infty}\otimes \gr(E_{i,x}) \ar[rr]^-{\diff\pi(\tilde{\sigma}^{k_{i}}_x(A_{i}))} && \mathcal{H}_{\pi}^{\infty}\otimes \gr(E_{i+1,x}) \ar[r] & \cdots.}$$

Like before we can adapt this definition with the transversal symbols. We now take $M$ to be a foliated filtered manifold.

\begin{Definition}A sequence of operators
$$\xymatrix{\cdots \ar[r]^-{A_{i-2}} & \Gamma(M,E_{i-1}) \ar[r]^-{A_{i-1}} & \Gamma(M,E_i) \ar[r]^-{A_i} & \Gamma(M,E_{i+1}) \ar[r]^-{A_{i+1}} & \cdots}$$
with $A_i \in \widetilde{\Psi}^{k_i}_H(M,E_{i},E_{i+1})$ is transversally graded Rockland if for every $x \in M$ and $\pi \in \widehat{T_{\sfrac{H}{H^0},x}M}\setminus\{1\}$ the sequence
$$\xymatrix{\cdots \ar[r] & \mathcal{H}_{\pi}^{\infty}\otimes \gr(E_{i,x}) \ar[rr]^-{\diff\pi\int_{H^0}\tilde{\sigma}^{k_{i}}_x(A_{i})} && \mathcal{H}_{\pi}^{\infty}\otimes \gr(E_{i+1,x}) \ar[r] & \cdots}$$
is weakly exact.
\end{Definition}

In particular for a sequence of operators to be (transversal/graded) Rockland, a necessary condition is that the sequence of operators obtained from the (transversal/graded) symbols has to be a complex.

\section{Graded transverse Rockland property of the transverse complex}

Let $G$ be a semi-simple Lie group, $P$ a parabolic subgroup and $(M,F)$ a manifold with transverse parabolic $(G,P)$-geometry. We consider $\E$ a $G$-representation and $E \to M$ the associated bundle. We take $\nabla$ the corresponding tractor connection on $E$ and the sequence of operators 
$$(\Omega^{\bullet,0}(M,E),\dn_N),$$ 
introduced in Section \ref{TransverseComplex}.

The goal of this section is to prove that the transverse complex is graded transversally Rockland. To do this we will identify the graded symbol of $\diff^{\nabla}$ to the differential of the cohomological complex of $\g_-$ acting on $\E$ and will need the regularity assumption. We will prove more general results for foliated filtered manifolds with graded vector bundle and connections in the spirit of \cite{dave2017graded}. Those results will hold under some assumptions that will automatically hold for regular transverse parabolic geometries when the vector bundle is a tractor bundle and the connection the associated tractor connection.

Let $M$ be a foliated filtered manifold, $E \to M$ a filtered vector bundle and $\nabla$ a linear connection on $E$ that preserves the filtration. By that we mean that if $X \in \Gamma(H^p), p\leq 0$ and $\xi\in E^q$ then $\nabla_X\xi \in \Gamma(E^{p+q})$\footnote{Recall that here the convention is that the $H^p$ are defined for non-positive $p$, thus the connection lowers the degree in the fibers of $E$.}. Define $\omega := \gr(\nabla) \in \Gamma(M,\gt^*_{H}M \otimes \gr(E))$\footnote{For transverse parabolic geometries it corresponds to $\rho \circ \omega$ where $\rho \colon \g \to \End(\E)$ is the representation inducing the tractor bundle.}. Since $\nabla$ is filtration preserving, $\omega$ takes values in the space of elements of degree $0$ (for the graduation of the graded tensor product). Sections of $H^0$ are considered as degree 1 so their image under $\omega$ vanishes. We can thus push it forward to $\overline{\omega} \in \Gamma(M,\gt^*_{\sfrac{H}{H^0}}M \otimes \gr(E))$. More generally if $A \colon E \to T^*M \otimes E$ preserves the filtration then images of sections of $H^0$ vanish under $\gr(A)$ so we can factor it to 
$$\overline{\gr(A)} \colon \gr(E) \to \gt_{\sfrac{H}{H^0}}^*M\otimes \gr(E).$$
We can extend the construction of the transverse complex of Section \ref{TransverseComplex} in this case so we fix a subbundle $N \subset TM$ complementary to $H^0$. Since $\nabla$ is filtration preserving, the operators $\diff_N^{\nabla}$ are differential operators of graded filtered order $0$.

We have identifications 
$$\gr(\Lambda^kT^*M\otimes E) \cong \Lambda^k\gt_H^*M \otimes \gr(E)$$
and 
$$\gr(\Lambda^{k,0}T^*M \otimes E) \cong \Lambda^k \gt^*_{\sfrac{H}{H^0}}M \otimes \gr(E).$$
Let $x \in M$,we have the identification:
$$\CCC^{\infty}(T_{\sfrac{H}{H^0},x}M,\gr(\Lambda^{k,0}T^*_xM\otimes E_x)) \cong \Omega^k(T_{\sfrac{H}{H^0},x}M,\gr(E_x))$$
because the tangent bundle of the group $T_{\sfrac{H}{H^0},x}M$ is trivial.

Finally to compute the symbol, we will use the following fact. If \(X \in \Gamma(H^{-k})\subset \mathfrak{X}(M)\), we can consider \(\nabla_X\) as a filtered operator of graded order \(k\). Then we have \(\tilde{\sigma}^k(\nabla_X) = (X\mod H^{-k+1}) \otimes \Id_E\) and the same formula holds for \(\int_{H^0}\tilde{\sigma}^k(\nabla_X)\) (replacing \(X\) by \(X \mod H^0\) if \(k=1\)).

\begin{Lemma}Under the identification 
$$\CCC^{\infty}(T_{\sfrac{H}{H^0},x}M,\gr(\Lambda^{k,0}T^*_xM\otimes E_x)) \cong \Omega^k(T_{\sfrac{H}{H^0},x}M,\gr(E_x))$$
we have:
$$\int_{H^0}\tilde{\sigma}^{0}_x(\diff^{\nabla}_N) = \diff + \overline{\omega}\wedge_x\cdot.$$
\end{Lemma}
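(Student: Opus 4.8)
The plan is to compute the transverse principal symbol of $\dn_N$ in the graded filtered calculus by reducing to the action of $\nabla_X$ on basic one-forms and scalars, and then recognizing the resulting fiberwise operator as the Chevalley--Eilenberg differential on $\Omega^{\bullet}(T_{\sfrac{H}{H^0},x}M, \gr(E_x))$. First I would fix $x\in M$ and recall that $\dn_N$ has graded filtered order $0$ (since $\nabla$ preserves the filtration), so $\int_{H^0}\tilde{\sigma}^0_x(\dn_N)$ is a genuine operator on the group $T_{\sfrac{H}{H^0},x}M$. The left-invariant vector fields on this group are exactly the elements of $\gt_{\sfrac{H}{H^0},x}M \cong \g_-$ (under the regularity assumption), and differentiating along such a field $X\in\faktor{H^{-k}}{H^{-k+1}}$ is, at the symbol level, the operator $\int_{H^0}\tilde{\sigma}^k(\nabla_X) = (X\bmod H^{-k+1})\otimes \Id_E$ recalled just before the statement.

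\textbf{Key steps.} I would write the standard Cartan formula for $\dn$ on a $k$-form $\eta$ valued in $E$,
\begin{equation*}
\dn\eta(X_0,\dots,X_k) = \sum_i (-1)^i \nabla_{X_i}\bigl(\eta(\dots\widehat{X_i}\dots)\bigr) + \sum_{i<j}(-1)^{i+j}\eta([X_i,X_j],\dots),
\end{equation*}
project it to the $(1,0)$-part to obtain $\dn_N$, and then pass to the graded symbol by inserting left-invariant vector fields $X_0,\dots,X_k \in \g_-$. The first (connection) sum contributes, via $\int_{H^0}\tilde{\sigma}(\nabla_{X_i})$, the de Rham term $\diff$ on the group together with the zeroth-order term $\overline{\omega}\wedge\cdot$ coming from $\gr(\nabla)=\omega$: the top-order part of $\nabla_{X_i}$ gives left-invariant differentiation of the coefficient functions (the flat de Rham differential $\diff$ on $T_{\sfrac{H}{H^0},x}M$), while the order-zero part $\omega(X_i)\in\gr(E)$ acts by wedging, producing $\overline{\omega}\wedge\cdot$. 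The second (bracket) sum contributes the Lie-algebra term: the bracket $[X_i,X_j]$ must be taken in the \emph{osculating} algebra $\gt_{\sfrac{H}{H^0},x}M$, and this is precisely where regularity enters, since by the regularity hypothesis $\gt_{\sfrac{H}{H^0}}M\cong M\times\g_-$ as bundles of Lie algebras. Because the de Rham $\diff$ on a Lie group already encodes its structure constants via the Maurer--Cartan relations, the bracket sum is absorbed into $\diff$, so that $\diff$ on the right-hand side is the full group de Rham differential incorporating the $\g_-$-bracket.

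\textbf{Main obstacle.} The delicate point is bookkeeping the order of each constituent operator under the graded filtered order and checking that all the terms land in the correct homogeneous stratum, so that nothing of positive order survives and nothing of the expected order-zero part is lost. Concretely, each $\nabla_{X_i}$ with $X_i\in\faktor{H^{-k_i}}{H^{-k_i+1}}$ has order $k_i$, but the graded order of $\dn_N$ is $0$ because the filtration shift of $E$ compensates exactly; I would need to verify that the top-order symbol of the connection sum reassembles into left-invariant differentiation while the $\overline{\omega}\wedge\cdot$ piece is exactly the graded-order-zero residue $\gr(\nabla)$ pushed through $\int_{H^0}$. I expect the hardest part to be justifying rigorously that the bracket contribution, after applying $\int_{H^0}$ and restricting to $T_{\sfrac{H}{H^0},x}M$, matches the structure constants implicit in the invariant de Rham differential $\diff$ on the group --- this is exactly the content of the regularity assumption $\gt_{\sfrac{H}{H^0}}M\cong M\times\g_-$, and once it is invoked the identification of $\int_{H^0}\tilde{\sigma}^0_x(\dn_N)$ with the Chevalley--Eilenberg differential $\partial = \diff + \overline{\omega}\wedge\cdot$ follows.
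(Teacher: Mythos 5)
Your proposal is correct and follows essentially the same route as the paper: write the Koszul formula for $\dn$, project to the $(1,0)$-part, freeze coefficients at $x$ and insert invariant vector fields, so that the connection sum yields the invariant differentiation plus the $\gr(\nabla)=\omega$ wedge term and the bracket sum supplies the Maurer--Cartan part of the group de Rham differential. The only point worth adjusting is that the lemma is stated for an arbitrary foliated filtered manifold with a filtration-preserving connection, so the bracket contribution simply matches the structure constants of the osculating group $T_{\sfrac{H}{H^0},x}M$ as it stands --- the regularity hypothesis $\gt_{\sfrac{H}{H^0}}M\cong M\times\g_-$ is not needed here and only enters later when the symbol is identified with the Chevalley--Eilenberg differential of $\g_-$.
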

\begin{proof}
Let \(k\geq 0\), \(\eta \in \Omega^{k,0}(M,E)\), and \(X_1,\cdots,X_{k+1} \in \mathfrak{X}(M)\). We have the formula:
\begin{align*}
\dn &\eta (X_1,\cdots,X_{k+1}) = \sum_{j = 1}^{k} (-1)^{j+1} \nabla_{X_j}\eta(X_1,\cdots,\widehat{X_j},\cdots,X_{k+1}) \\
							  &+ \sum_{1\leq i < j\leq k+1}(-1)^{i+j} \eta([X_i,X_j],X_1,\cdots,\widehat{X_i},\cdots,\widehat{X_j},\cdots,X_{k+1}).
\end{align*}
We get a similar formula for \(\dn_N\) by replacing all the vectors above by their transverse part (after choosing a complementary bundle \(N\subset TM, N \oplus H^0 = TM\)). We now freeze the coefficients at \(x\in M\) and use the identification mentioned above: \(\CCC^{\infty}(T_{\sfrac{H}{H^0},x}M,\gr(\Lambda^{k,0}T^*_xM\otimes E_x)) \cong \Omega^k(T_{\sfrac{H}{H^0},x}M,\gr(E_x))\). Let \(\mu \in \Lambda^k\ttt^*_{\sfrac{H}{H^0},x}M\otimes \gr(E_x)\) and \(\xi_1, \cdots, \xi_{k+1} \in \ttt_{H,x}M\) considered as invariant form and vector fields on \(T_{\sfrac{H}{H^0}}M\). We obtain:
\begin{align*}
\tilde{\sigma}^0_x(\dn_N) &\mu (\xi_1,\cdots,\xi_{k+1}) = \sum_{j = 1}^{k} (-1)^{j+1} \bar{\omega}_x(\xi_j)\mu(\xi_1,\cdots,\widehat{\xi_j},\cdots,\xi_{k+1}) \\
							  &+ \sum_{1\leq i < j\leq k+1}(-1)^{i+j} \mu([\xi_i,\xi_j],\xi_1,\cdots,\widehat{\xi_i},\cdots,\widehat{\xi_j},\cdots,\xi_{k+1}).
\end{align*}
Therefore \(\tilde{\sigma}^0_x(\dn) (\mu) = (\diff + \bar{\omega}_x\wedge)(\mu)\) and we get the result we wanted.
\end{proof}

Since $\dn_N$ is a differential operator of order $0$ we can also consider $\gr(\dn_N)$ and taking its quotient $\partial^{\overline{\omega}} := \overline{\gr(\dn)} = \gr(\dn_N)$ as a morphism 
$$\partial^{\overline{\omega}}\colon \Lambda^k\gt_{\sfrac{H}{H^0}}^*M \otimes \gr(E) \to \Lambda^{k+1}\gt_{\sfrac{H}{H^0}}^*M \otimes \gr(E).$$

\begin{Proposition}For every $x \in M$, 
$$(\Lambda^{\bullet}\gt_{\sfrac{H}{H^0},x}^*M \otimes \gr(E_x),\partial^{\overline{\omega_x}}),$$
is a Chevalley-Eilenberg type sequence of operators associated to the map $\overline{\omega}_x$.\end{Proposition}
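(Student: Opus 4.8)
The plan is to deduce the statement directly from the previous Lemma, reinterpreting the computed symbol as a Chevalley--Eilenberg coboundary. The first observation is that since $\dn_N$ has graded filtered order $0$, its graded operator is tensorial and agrees with its principal symbol; factoring through the quotient $T_HM \to T_{\sfrac{H}{H^0}}M$ identifies $\partial^{\overline{\omega}} = \overline{\gr(\dn_N)}$ with $\int_{H^0}\tilde{\sigma}^0(\dn_N)$. Hence, at a fixed $x\in M$ and under the identification $\CCC^{\infty}(T_{\sfrac{H}{H^0},x}M,\gr(\Lambda^{\bullet,0}T^*_xM\otimes E_x)) \cong \Omega^{\bullet}(T_{\sfrac{H}{H^0},x}M,\gr(E_x))$, the previous Lemma already computes $\partial^{\overline{\omega_x}} = \diff + \overline{\omega}_x\wedge$.

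Next I would read off the explicit formula from that Lemma: for an invariant form $\mu$ and left-invariant fields $\xi_1,\dots,\xi_{k+1}$, identified with elements of $\gt_{\sfrac{H}{H^0},x}M$, one has
\begin{align*}
\partial^{\overline{\omega_x}}\mu(\xi_1,\dots,\xi_{k+1}) &= \sum_{j}(-1)^{j+1}\,\overline{\omega}_x(\xi_j)\,\mu(\xi_1,\dots,\widehat{\xi_j},\dots,\xi_{k+1}) \\
&\quad + \sum_{i<j}(-1)^{i+j}\,\mu([\xi_i,\xi_j],\xi_1,\dots,\widehat{\xi_i},\dots,\widehat{\xi_j},\dots,\xi_{k+1}).
\end{align*}
This is verbatim the Chevalley--Eilenberg coboundary of the nilpotent Lie algebra $\gt_{\sfrac{H}{H^0},x}M$ with coefficients in $\gr(E_x)$, the coefficient map being $\overline{\omega}_x\colon \gt_{\sfrac{H}{H^0},x}M \to \End(\gr(E_x))$ and the bracket being the osculating bracket on the associated graded. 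The terms coming from $H^0$ drop out automatically, since $\overline{\omega}_x$ annihilates the degree-$1$ classes produced by $H^0$ and the osculating brackets descend to the quotient algebra; the expression is therefore well defined on $\gt_{\sfrac{H}{H^0},x}M$, which is exactly the assertion that the sequence is of Chevalley--Eilenberg type associated to $\overline{\omega}_x$.

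The point that I expect to require the most care, and which is the bridge to the later Rockland statement, is that such a Chevalley--Eilenberg \emph{type} sequence is a genuine complex (i.e. $\partial^{\overline{\omega_x}}\circ\partial^{\overline{\omega_x}} = 0$) precisely when $\overline{\omega}_x$ is a Lie algebra representation, $\overline{\omega}_x([\xi,\eta]) = [\overline{\omega}_x(\xi),\overline{\omega}_x(\eta)]$. Here I would argue that the failure of this identity is governed by the graded symbol of the curvature $\overline{\gr(R^{\nabla})}_x$: passing the defining relation $R^{\nabla}(X,Y) = \nabla_X\nabla_Y - \nabla_Y\nabla_X - \nabla_{[X,Y]}$ to graded order and then to the transverse symbol identifies $\overline{\gr(R^{\nabla})}_x$, up to sign, with the defect $[\overline{\omega}_x(\xi),\overline{\omega}_x(\eta)] - \overline{\omega}_x([\xi,\eta])$. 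In the regular transverse parabolic setting this defect vanishes by Proposition \ref{CurvatureRegular}, which forces $K(H^{-i},H^{-j})\subset \g^{-i-j+1}$ and hence kills the graded curvature; $\overline{\omega}_x$ then reduces to the restriction of the $\g$-representation to $\g_-\cong \gt_{\sfrac{H}{H^0},x}M$, and the sequence becomes the honest Chevalley--Eilenberg complex to which Kostant's Hodge theory of Section \ref{Algebraic} can be applied.
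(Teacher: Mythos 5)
Your proposal is correct and follows essentially the same route as the paper: the paper's proof is the one-line observation that $\partial^{\overline{\omega_x}}$ is by construction the unique extension of the action of $\overline{\omega}_x$ to higher exterior powers, which is exactly the Chevalley--Eilenberg coboundary you read off from the preceding Lemma. One small imprecision worth noting: $\gr(\dn_N)$ is the tensorial (diagonal) part of $\int_{H^0}\tilde{\sigma}^0(\dn_N)$, i.e.\ its restriction to invariant forms, rather than the full operator $\diff+\overline{\omega}_x\wedge$ on all of $\Omega^{\bullet}(T_{\sfrac{H}{H^0},x}M,\gr(E_x))$ --- but since $\partial^{\overline{\omega_x}}$ is defined on $\Lambda^{\bullet}\gt^*_{\sfrac{H}{H^0},x}M\otimes\gr(E_x)$ this is precisely what your displayed formula computes, and your third paragraph anticipates the following Proposition (the curvature criterion for $(\partial^{\overline{\omega_x}})^2=0$) rather than being needed here.
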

\begin{proof}
By construction $\partial^{\overline{\omega}}$ is the unique extension to higher exterior powers of the action of $\omega_x$ hence it is the Chevalley-Eilenberg differential obtained from this map.
\end{proof}

We did not call this sequence of operators a complex because it is a complex if and only if $\overline{\omega_x}$ is a representation of $\gt_{\sfrac{H}{H^0},x}M$ on $\gr(E_x)$. This leads us to the necessary condition for the sequence to be transversally graded Rockland: the sequence of symbols has to be exact.

\begin{Proposition}For each $x\in M$ we have the equivalence:
\begin{itemize}
\item[a)] The (2,0)-part of the curvature\footnote{We only showed that the curvature had this bi-degree for transverse parabolic geometries. This does not seem to hold in general.}, $(R^{\nabla})^{(2,0)} \in \Omega^{2,0}(M,\End(E))$ has degree $1$, i.e. if  $x\in M$, $X \in H_x^i, Y \in H_x^j, \psi \in E_x^p$ then 
$$R^{\nabla}(X,Y)\psi \in E_x^{p+i+j+1}.$$
\item[b)] $\int_{H^0}\tilde{\sigma}_x^{0}(\dn_N)^2 = 0$.
\item[c)] $(\partial^{\overline{\omega_x}})^2 = 0$.
\item[d)] $\overline{\omega_x}$ is a representation of $\gt_{\sfrac{H}{H^0},x}M$ on $\gr(E_x)$ that preserves the graduation.
\end{itemize}
\end{Proposition}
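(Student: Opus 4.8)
The plan is to route all four conditions through the single transverse principal symbol $\int_{H^0}\tilde\sigma^0_x\bigl((\dn_N)^2\bigr)$, which I will compute in two different ways. The two computations pin this quantity to $(\partial^{\overline{\omega_x}})^2$ on one side and to the transverse symbol of the curvature on the other, and each identification trivializes one of the required equivalences. Concretely I would first dispose of the two formal equivalences $b)\Leftrightarrow c)$ and $c)\Leftrightarrow d)$, and then concentrate on the genuinely geometric equivalence $a)\Leftrightarrow c)$.

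For $b)\Leftrightarrow c)$ I invoke the preceding Lemma, which identifies $\int_{H^0}\tilde\sigma^0_x(\dn_N)$ with $\diff+\overline{\omega}_x\wedge = \partial^{\overline{\omega_x}}$. Since $\dn_N$ has graded filtered order $0$, so does $(\dn_N)^2$; and because the principal symbol map is multiplicative and $\int_{H^0}$ is compatible with products (Sections \ref{FilteredCalculus} and \ref{Graded order}), one gets $\int_{H^0}\tilde\sigma^0_x\bigl((\dn_N)^2\bigr)=\bigl(\int_{H^0}\tilde\sigma^0_x(\dn_N)\bigr)^2=(\partial^{\overline{\omega_x}})^2$, so that $b)$ and $c)$ are literally the same statement. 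The equivalence $c)\Leftrightarrow d)$ is the algebraic fact already flagged after the previous Proposition: the Chevalley--Eilenberg coboundary built from a linear map $\gt_{\sfrac{H}{H^0},x}M\to\End(\gr(E_x))$ squares to zero exactly when that map is a Lie algebra representation, and since $\overline{\omega_x}$ has graded degree $0$ such a representation automatically preserves the grading.

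The substance is $a)\Leftrightarrow c)$, obtained by recomputing the same symbol through Corollary \ref{Curvature Indentities}, namely $(\dn_N)^2=(R^{\nabla})^{(2,0)}-(\dn_{\F}\iota_{\Theta}+\iota_{\Theta}\dn_{\F})$ (and, in the general foliated filtered case, modulo further terms that I expect to be transversally negligible). The two terms $\dn_{\F}\iota_{\Theta}$ and $\iota_{\Theta}\dn_{\F}$ differentiate only along $\F=H^0$: since $\int_{H^0}\tilde\sigma^1(\nabla_X)=(X\bmod H^0)\otimes\Id$, these symbols vanish for $X\in\Gamma(H^0)$, so $\int_{H^0}\tilde\sigma(\dn_{\F})=0$ and, by multiplicativity, both cross terms have vanishing transverse symbol. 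This leaves $\int_{H^0}\tilde\sigma^0_x\bigl((\dn_N)^2\bigr)=\int_{H^0}\tilde\sigma^0_x\bigl((R^{\nabla})^{(2,0)}\bigr)$. It then remains to identify the graded order-$0$ transverse symbol of the tensorial, bidegree-$(2,0)$ operator $(R^{\nabla})^{(2,0)}$ as its leading part: for $X\in H^i_x,\ Y\in H^j_x$ (non-positive $i,j$), the component of $R^{\nabla}(X,Y)$ contributing to graded order $0$ is exactly the one sending $E^p_x$ into $E^{p+i+j}_x$ modulo $E^{p+i+j+1}_x$, while components landing already in $E^{p+i+j+1}_x$ are of negative graded order and drop out of $\tilde\sigma^0$. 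Hence this order-$0$ transverse symbol vanishes iff $R^{\nabla}(X,Y)E^p_x\subset E^{p+i+j+1}_x$ for all $i,j,p$, which is precisely condition $a)$. Chaining the two computations of $\int_{H^0}\tilde\sigma^0_x\bigl((\dn_N)^2\bigr)$ yields $(\partial^{\overline{\omega_x}})^2=\int_{H^0}\tilde\sigma^0_x\bigl((R^{\nabla})^{(2,0)}\bigr)$ and therefore $a)\Leftrightarrow b)\Leftrightarrow c)\Leftrightarrow d)$.

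I expect the main obstacle to be this last step, specifically the careful bookkeeping of the graded tensor-product order from Section \ref{Graded order}: one must verify that the graded order-$0$ symbol of the curvature bundle map is exactly its ``diagonal'' leading part shifting the $E$-filtration by $i+j$, and that the longitudinal terms $\dn_{\F}\iota_{\Theta}+\iota_{\Theta}\dn_{\F}$ (once precomposed or postcomposed with the tensorial contraction $\iota_{\Theta}$) genuinely contribute no transverse order-$0$ symbol, so that the orders add as needed for multiplicativity and the decomposition of $\tilde\sigma^0$ across the sum is legitimate. By contrast, the equivalences $b)\Leftrightarrow c)\Leftrightarrow d)$ are essentially formal.
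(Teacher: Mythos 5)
Your proposal is correct and follows essentially the same route as the paper: $b)\Leftrightarrow c)$ via the preceding symbol Lemma and multiplicativity, $c)\Leftrightarrow d)$ via the Chevalley--Eilenberg identity $(\partial^{\overline{\omega}_x})^2(X,Y)=[\omega(X),\omega(Y)]-\omega([X,Y])$, and $a)\Leftrightarrow b)$ by pushing the identity of Corollary \ref{Curvature Indentities} through $\int_{H^0}\tilde{\sigma}^0$ and discarding the cross-terms $\dn_{\F}\iota_{\Theta}+\iota_{\Theta}\dn_{\F}$. The only (immaterial) difference is how those cross-terms are disposed of: you invoke the vanishing of the transverse symbol of derivatives along $H^0$, whereas the paper argues by degree counting, using that $\Theta$ takes values in the foliation and that $\nabla$ preserves the filtration.
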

\begin{proof}
To make the link with the curvature recall that 
$$(\dn_N)^2 = (R^{\nabla})^{(2,0)}\wedge\cdot - (\diff^{\nabla}_F \iota_{\Theta} + \iota_{\Theta}\diff^{\nabla}_F).$$
If we show that the graded principal symbol of $(\diff^{\nabla}_F \iota_{\Theta} + \iota_{\Theta}\diff^{\nabla}_F)$ vanishes then we get the equivalence between $a),b)$ and $c)$ by the previous lemma and proposition. The equivalence $c) \Leftrightarrow d)$ is immediate:
$$(\partial^{\overline{\omega}_x})^2(X,Y) = [\omega(X),\omega(Y)] - \omega([X,Y]).$$
Now since $\nabla$ preserves the filtration, elements of degree $0$ do not change the degree. However $\Theta$ takes value in sections of the foliation, thus in elements of degree $0$. Therefore, the terms other than $(R^{\nabla})^{(2,0)}$ vanish when we take the associated graded morphism and the transverse symbol.
\end{proof}

Under these conditions we can state the main result:

\begin{Theorem}\label{TrsvRock}
Let $(M,H)$ be a foliated filtered manifold $E \to M$ a filtered vector bundle and $\nabla$ a $\ho(H^0)$-equivariant connection on $E$ that preserves the filtration and such that the $(2,0)$-component of its curvature has degree $1$. Then the sequence of operators given by the transverse de Rham sequence $(\Omega^{\bullet,0}(M,E),\dn_N)$ is transversally graded Rockland.
\end{Theorem}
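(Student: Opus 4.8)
The plan is to reduce the transverse graded Rockland condition for the whole de Rham sequence to a single pointwise algebraic statement about the Chevalley--Eilenberg complex of a nilpotent Lie algebra, and then invoke Kostant's Hodge theory. By the definition of transversally graded Rockland for sequences, I need to show that for every $x \in M$ and every non-trivial irreducible unitary representation $\pi \in \widehat{T_{\sfrac{H}{H^0},x}M}\setminus\{1\}$, the sequence obtained by applying $\diff\pi$ to $\int_{H^0}\tilde\sigma^0_x(\dn_N)$ is weakly exact. The first step is to identify these symbol maps. By the Lemma, under the natural identification $\CCC^{\infty}(T_{\sfrac{H}{H^0},x}M,\gr(\Lambda^{\bullet,0}T^*_xM\otimes E_x)) \cong \Omega^{\bullet}(T_{\sfrac{H}{H^0},x}M,\gr(E_x))$, we have $\int_{H^0}\tilde\sigma^0_x(\dn_N) = \diff + \overline\omega_x\wedge$, which is precisely the Chevalley--Eilenberg differential $\partial^{\overline\omega_x}$ acting on forms valued in $\gr(E_x)$. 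The degree-$1$ hypothesis on the $(2,0)$-curvature, via the equivalence $a)\Leftrightarrow d)$ of the preceding Proposition, guarantees that $\overline\omega_x$ is a genuine representation of $\gt_{\sfrac{H}{H^0},x}M$ on $\gr(E_x)$, so $(\partial^{\overline\omega_x})^2 = 0$ and we do indeed have a complex at the symbol level---this is the necessary first requirement for Rocklandness.

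Next I would translate the weak exactness condition through the representation theory of the nilpotent group $T_{\sfrac{H}{H^0},x}M$. The symbol complex is the left-invariant de Rham complex on this group with coefficients in the $\gt_{\sfrac{H}{H^0},x}M$-module $\gr(E_x)$, i.e. the Chevalley--Eilenberg cochain complex $C^{\bullet}(\gt_{\sfrac{H}{H^0},x}M, \gr(E_x))$ promoted to the group level. Applying $\diff\pi$ for an irreducible $\pi$ amounts to computing the Lie algebra cohomology of $\gt_{\sfrac{H}{H^0},x}M$ with coefficients in $\HH^{\infty}_{\pi}\otimes\gr(E_x)$. Under the regularity assumption, $\gt_{\sfrac{H}{H^0},x}M \cong \g_-$ as graded Lie algebras, and the coefficient module is built from the restriction of the $\g$-module $\E$; the key algebraic input is Kostant's theorem from Section~\ref{Algebraic}, which computes this cohomology exactly via the harmonic representatives $\ker(\Box_\bullet)$.

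The main obstacle---and the technical heart of the argument---is the \emph{weak exactness} of the $\diff\pi$-image complex: one must show the image of each arrow is not merely contained in, but \emph{dense} in, the kernel of the next. For a single nontrivial irreducible $\pi$, purely algebraic exactness would follow from a Hodge-theoretic argument with the induced Laplacian $\diff\pi(\partial^{\overline\omega_x})\diff\pi(\partial^{\overline\omega_x})^* + \diff\pi(\partial^{\overline\omega_x})^*\diff\pi(\partial^{\overline\omega_x})$ being injective on the orthogonal complement of the harmonic space; but on the infinite-dimensional smooth vectors $\HH^{\infty}_{\pi}$ one only obtains a closed-range/density statement rather than honest exactness, so care is needed. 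The cleanest route is to appeal directly to the graded Rockland machinery of \cite{dave2017graded}: once we know the symbol sequence is the Chevalley--Eilenberg complex of $\g_-$ on $\gr(E_x)$, the weak exactness after applying any nontrivial irreducible $\pi$ is exactly the graded Rockland condition established there for such complexes, so the result follows by combining the symbol identification above with their analysis. I would therefore structure the proof as: (1) identify the transverse symbol as $\partial^{\overline\omega_x}$ via the Lemma; (2) use the curvature hypothesis and the Proposition to conclude it is a Chevalley--Eilenberg complex for a representation $\overline\omega_x$; (3) invoke the regularity isomorphism $\gt_{\sfrac{H}{H^0},x}M\cong\g_-$; (4) apply the graded Rockland property of Chevalley--Eilenberg complexes of nilpotent Lie algebras established in \cite{dave2017graded}, concluding weak exactness for every $\pi$.
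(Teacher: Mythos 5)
Your reduction to the symbol level coincides with the paper's: identify $\int_{H^0}\tilde\sigma^0_x(\dn_N)$ with the Chevalley--Eilenberg differential $\partial^{\overline{\omega}_x}$ via the preceding Lemma, and use the degree-$1$ hypothesis on the $(2,0)$-curvature (through the equivalence $a)\Leftrightarrow d)$ of the preceding Proposition) to conclude that $\overline{\omega}_x$ is a genuine representation of $\gt_{\sfrac{H}{H^0},x}M$ on $\gr(E_x)$, so that the symbol sequence is a complex. Up to that point you match the paper exactly.

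The gap is in the final step, and it is a real one. First, the theorem is stated for an arbitrary foliated filtered manifold, so there is no regularity isomorphism and no $\g_-$ available: $\gt_{\sfrac{H}{H^0},x}M$ is just some graded nilpotent Lie algebra and $\gr(E_x)$ some finite-dimensional module over it. More importantly, Kostant's Hodge theory is the wrong tool even in the parabolic case: it computes $H^\bullet(\g_-,\mathbb{E})$ for the \emph{finite-dimensional} module $\mathbb{E}$, and that cohomology is non-zero in general --- its non-vanishing is the entire point of the BGG construction --- so it cannot be "the key algebraic input" for exactness. What is actually needed is that after tensoring with $\HH^\infty_\pi$ for a \emph{non-trivial} irreducible unitary $\pi$, the Chevalley--Eilenberg cohomology $H^\bullet(\n,\HH^\infty_\pi\otimes V)$ vanishes identically; then the symbol complex is exact on the nose and weak exactness is automatic. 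The paper isolates this as a separate lemma and proves it by a self-contained induction: Engel's theorem plus the long exact sequence reduce to trivial $V$; one then inducts on $\dim\n$ via the Hochschild--Serre spectral sequence for a one-dimensional central subalgebra $\mathfrak{z}$, using that $\mathfrak{z}$ acts on $\HH^\infty_\pi$ by the infinitesimal character --- if that scalar is non-zero, $H^\bullet(\mathfrak{z},\HH^\infty_\pi)=0$ and the $E_2$-page collapses; if it is zero, $\pi$ descends to the quotient group and the inductive hypothesis applies. Your closing appeal to the machinery of \cite{dave2017graded} is pointing at essentially this lemma, but as written your proposal never supplies the actual mechanism of exactness, and the Kostant/Hodge route you sketch beforehand would not deliver it.
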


\begin{proof}
We have already computed the graded transverse symbol of $\dn_N$. Under those assumptions we get $\int_{H^0}\tilde{\sigma}^{0}(\dn_N)^2 = 0$. Let $x\in M$ and take a non-trivial irreducible unitary representation of the fiber group $\pi \in \widehat{T_{\sfrac{H}{H^0},x}M}\setminus\{1\}$. The symbolic complex 
\[\left(\Lambda^{\bullet}\gt^*_{\sfrac{H}{H^0},x}M\otimes E_x \otimes \mathcal{H}_{\pi}^{\infty},\diff\pi\left(\int_{H^0}\tilde{\sigma}^{0}(\dn_N)\right)\right),\]
is the Chevalley-Eilenberg complex of the $\gt_{\sfrac{H}{H^0},x}M$-module $ E_x \otimes \mathcal{H}_{\pi}^{\infty}$. We need to show that it is an exact sequence, i.e. that its cohomology is trivial.
\end{proof}

\begin{Lemma}Let $N$ be a finite dimensional, connected, simply connected, nilpotent Lie group and $\n$ its Lie algebra. Let $\pi \in \hat{N}\setminus \{1\}$ be a non-trivial irreducible unitary representation and $V$ be a finite dimensional representation of $N$. Then the associated Chevalley-Eilenberg cohomology vanishes: $H^*(\n,\mathcal{H}_{\pi}^{\infty}\otimes V) = 0$.
\end{Lemma}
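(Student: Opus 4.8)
The plan is to build, for a well-chosen central element $Z \in \mathfrak{z}(\n)$, a contracting homotopy of the Chevalley--Eilenberg complex $(\Lambda^\bullet\n^*\otimes W, \partial)$, where $W := \mathcal{H}_\pi^\infty \otimes V$, and to fall back on an induction on $\dim\n$ when no single central element suffices. The starting point is Cartan's homotopy formula: for $Z$ central the Lie derivative $\mathcal{L}_Z$ acts trivially on the factor $\Lambda^\bullet\n^*$ (since $\ad(Z)=0$) and coincides on $W$ with the module action $\rho_W(Z) = \diff\pi(Z)\otimes \Id_V + \Id\otimes \diff\rho_V(Z)$, while $\mathcal{L}_Z = \iota_Z\partial + \partial\iota_Z$ (with $\mathcal{L}_Z$ and $\iota_Z$ understood as acting on the form factor, $\rho_W(Z)$ on the coefficient factor). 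If $\rho_W(Z)$ is invertible, then $h := \iota_Z\rho_W(Z)^{-1}$ satisfies $\partial h + h\partial = \Id$ (using that $\rho_W(Z)$ commutes with $\partial$), so the complex is contractible and $H^*(\n,W)=0$. One checks that $h$ preserves smooth vectors, because $\rho_W(Z)^{-1} = \Id\otimes(\ii\lambda(Z)\Id_V+\diff\rho_V(Z))^{-1}$ only inverts a matrix on the finite-dimensional factor $V$.

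It remains to make $\rho_W(Z)$ invertible. Since $\pi$ is unitary and irreducible and $Z$ is central, Schur's lemma gives $\diff\pi(Z) = \ii\lambda(Z)\Id$ with $\lambda\colon\mathfrak{z}(\n)\to\R$ linear, so the spectrum of $\rho_W(Z)$ is $\{\,\ii\lambda(Z)+\mu : \mu\in\operatorname{spec}(\diff\rho_V(Z))\,\}$. Here I use the key structural feature of the coefficient module in the application: $V=\gr(E_x)$ is a module over the negatively graded nilpotent algebra $\gt_{\sfrac{H}{H^0},x}M\cong\g_-$, so every element of $\n$ — in particular every central $Z$ — acts by a degree-lowering, hence nilpotent, operator, i.e. $\operatorname{spec}(\diff\rho_V(Z))=\{0\}$. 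Thus $\rho_W(Z)$ is invertible as soon as $\lambda(Z)\neq 0$, and the homotopy above finishes the proof whenever $\pi$ has nontrivial central character.

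The remaining, and genuinely delicate, case is when $\pi$ has trivial central character. Then $\pi$ is trivial on $\exp(\R Z_0)$ for any $0\neq Z_0\in\mathfrak{z}(\n)$ and descends to a nontrivial irreducible unitary representation $\bar\pi$ of the simply connected nilpotent group $\bar N := N/\exp(\R Z_0)$, of strictly smaller dimension. Here I would argue by induction on $\dim\n$, using the Hochschild--Serre spectral sequence for the central ideal $\R Z_0\subset\n$,
\[ E_2^{p,q}=H^p\bigl(\n/\R Z_0,\,H^q(\R Z_0, W)\bigr)\ \Longrightarrow\ H^{p+q}(\n,W). \]
Because $\diff\pi(Z_0)=0$, the ideal acts on $W$ through $\Id\otimes\diff\rho_V(Z_0)$, whence $H^q(\R Z_0,W)\cong \mathcal{H}_{\bar\pi}^\infty\otimes H^q(\R Z_0,V)$, where $H^q(\R Z_0,V)$ is the kernel ($q=0$) or cokernel ($q=1$) of the nilpotent operator $\diff\rho_V(Z_0)$ — a finite-dimensional $\n/\R Z_0$-module on which the algebra still acts nilpotently, since $Z_0$ being central makes $\ker$ and $\im$ into $\n$-submodules. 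The induction hypothesis applied to $(\bar N,\bar\pi,H^q(\R Z_0,V))$ gives $E_2^{p,q}=0$ for all $p,q$, hence $H^*(\n,W)=0$; the base case $\dim\n=1$ is exactly the nontrivial-central-character case already treated.

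The main obstacle is precisely this last case: a nontrivial irreducible representation of a nilpotent group may have trivial central character (already for the Heisenberg group's one-dimensional characters, which factor through the abelianization), so a single Cartan homotopy cannot work in general and one is forced into the dimension induction. The second subtlety, needed to run both steps, is the nilpotency of the $\n$-action on $V$: it is automatic here because $V$ is a $\g_-$-module, but it genuinely cannot be dropped, since for $N=\R$, $\pi=e^{\ii\xi x}$ and $V=e^{-\ii\xi x}$ the module $\mathcal{H}_\pi^\infty\otimes V$ is trivial and its cohomology does not vanish. I would therefore carry the hypothesis ``$\n$ acts on $V$ by nilpotent operators'' through the induction, noting that it is inherited by the kernels and cokernels produced in the Hochschild--Serre reduction.
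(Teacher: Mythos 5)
Your argument is correct and, at its core, parallel to the paper's: both hinge on the dichotomy between trivial and non-trivial central character and on an induction on $\dim\n$ driven by the Hochschild--Serre spectral sequence for a one-dimensional central ideal. You differ in two places. First, when the central character is non-trivial you contract the whole complex at once with the Cartan homotopy $h=\iota_Z\,\rho_W(Z)^{-1}$, whereas the paper stays inside the spectral sequence and computes $H^0(\mathfrak{z},\mathcal{H}_{\pi}^{\infty})=H^1(\mathfrak{z},\mathcal{H}_{\pi}^{\infty})=0$ directly (no invariant vectors; every derivation is inner); these are two packagings of the same fact, the invertibility of $\diff\pi(Z)$. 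Second, you carry $V$ through the dimension induction, replacing it at each step by the kernel and cokernel of the nilpotent operator $\diff\rho_V(Z_0)$, whereas the paper first reduces to $V=1$ via Engel's theorem and the long exact sequence and only then inducts on $\dim\n$; your version trades that preliminary induction on $\dim V$ for the bookkeeping of the nilpotency hypothesis, and both work.

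Your closing observation deserves emphasis: the nilpotency of the $\n$-action on $V$ is genuinely needed, your counterexample ($N=\R$, $\pi=e^{\ii\xi x}$, $V$ the conjugate character, so that $\mathcal{H}_{\pi}^{\infty}\otimes V$ is trivial) is valid, and the lemma as stated omits this hypothesis. The paper's own proof uses it silently: the appeal to Engel's theorem to produce a one-dimensional subrepresentation with \emph{trivial} action presupposes that $\n$ acts on $V$ by nilpotent operators --- for a general finite-dimensional module over a nilpotent Lie algebra one only gets a common weight vector, with possibly non-zero weight. In the intended application $V=\gr(E_x)$ is a finite graded module over the negatively graded $\g_-$ with degree-lowering action, so the hypothesis holds there, but it should be added to the statement of the lemma.
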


\begin{proof}
We first reduce to the case $V = 1$. Since $\n$ is nilpotent, using Engel's theorem we can find an invariant subspace $W \subset V$ of dimension 1. The action on $W$ is then trivial and the long exact sequence in cohomology becomes
\begin{equation*}
\resizebox{.9\hsize}{!}{$\xymatrix{\cdots \ar[r] & H^k(\n,\mathcal{H}_{\pi}^{\infty}\otimes V) \ar[r] & H^k(\n,\mathcal{H}_{\pi}^{\infty}\otimes \faktor{V}{W}) \ar[r] & H^{k+1}(\n,\mathcal{H}_{\pi}^{\infty}) \ar[r] & \cdots}$}
\end{equation*}
It thus suffices to prove the result for $V = 1$ and we get every finite dimensional representations by induction. Now for the trivial representation we use induction on the dimension of $\n$. Since $\n$ is nilpotent it has a non-trivial center \cite{Kirillov}, we take $\mathfrak{z} \subset \n$ a $1$-dimensional central subalgebra. Now the Hochschild-Serre exact sequence \cite{HochschildSerre} has its $E_2$-page equal to 
$$E_2^{p,q} = H^p(\faktor{\n}{\mathfrak{z}},H^q(\mathfrak{z},\mathcal{H}_{\pi}^{\infty}))$$
and converges to $H^*(\n,\mathcal{H}_{\pi}^{\infty})$. Now because $\pi$ is an irreducible representation of $G$ the action of $\mathfrak{z}$ on $\mathcal{H}_{\pi}^{\infty}$ is by scalar operators (the corresponding scalar is called the infinitesimal character, see \cite{Kirillov}). If the action of $\mathfrak{z}$ is non-trivial then the cohomology vanishes. Indeed, $H^0(\mathfrak{z},\mathcal{H}_{\pi}^{\infty})$ is the space of vectors that vanish under the action of $\mathfrak{z}$ hence $\{0\}$. The space $H^1(\mathfrak{z},\mathcal{H}_{\pi}^{\infty})$ is the space of $\mathcal{H}_{\pi}^{\infty}$-valued outer derivations and is thus trivial (if $D$ is such a derivation, we can write $Dx = \frac{1}{\lambda}\diff\pi(Z)(Dx)$ where $Z \in \mathfrak{z}\setminus\{0\}$ satisfies $\diff \pi(Z) = \lambda\Id_{\mathcal{H}_{\pi}^{\infty}}$ thus $D$ is inner).
In the trivial case we have 
$$H^*(\mathfrak{z},\mathcal{H}_{\pi}^{\infty}) = \mathcal{H}_{\pi}^{\infty} \oplus \mathcal{H}_{\pi}^{\infty}$$ 
as a $\faktor{\n}{\mathfrak{z}}$-module. To proceed with the induction, denote by $Z \subset N$ the closed subgroup of $N$ with Lie algebra $N$. The group $Z$ acts trivially on $\mathcal{H}_{\pi}^{\infty}$ and thus on  $\mathcal{H}_{\pi}$ by continuity. The representation $\pi$ factors to a non-trivial irreducible representation of $\faktor{N}{Z}$ whose space of smooth vector fields is $\mathcal{H}_{\pi}^{\infty}$. By induction on the dimension of $\n$ we have $H^*(\faktor{\n}{\mathfrak{z}},\mathcal{H}_{\pi}^{\infty}) = 0$ and thus the same result for $\n$ using the spectral sequence. Indeed the $E_2$-page becomes either directly $0$ or $H^p(\faktor{\n}{\mathfrak{z}},\mathcal{H}_{\pi}^{\infty})$ for $q = 0,1$ and thus still $0$, the $E_2$ page vanishes and so does the cohomology $H^*(\n,\mathcal{H}_{\pi}^{\infty})$.
\end{proof}

\section{Transverse BGG operators for transverse parabolic geometries}

As a particular case of Theorem \ref{TrsvRock} we get:

\begin{Proposition}Let $M$ be a foliated manifold with transverse parabolic $(G,P)$-geometry. Let $\E$ be a $G$-representation and $E \to M$ the associated tractor bundle with the associated tractor connection $\nabla$. If the geometry is regular then the transverse complex $(\Omega^{\bullet,0}(M,E),\dn_N)$ is transversally graded Rockland.
\end{Proposition}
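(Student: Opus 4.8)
The plan is to obtain this Proposition as a direct instance of Theorem \ref{TrsvRock}, so the entire task reduces to checking that a tractor bundle with its tractor connection, over a \emph{regular} transverse $(G,P)$-geometry, satisfies all the hypotheses of that theorem. I would proceed hypothesis by hypothesis, importing each verification from a result established earlier in the text, and reserve the real work for the curvature condition.

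First I would confirm that $(M,H)$ is a foliated filtered manifold. This is built into the definition of regularity: the lifts $H^{-i}$ of $\overline{\omega}^{-1}(\faktor{\g^{-i}}{\p})$ form a foliated Lie filtration precisely by the Definition of a regular geometry, equivalently by Proposition \ref{CurvatureRegular} i). Next, $E = M\times\E$ is a filtered vector bundle: since $\g$ is $|k|$-graded, the grading element $X_0\in\g_0$ with $\ad(X_0)|_{\g_i} = i\Id$ endows the finite-dimensional $\g$-module $\E$ with a filtration, as recalled in Section \ref{Graded order}, and this makes the trivial bundle $E$ filtered.

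Then I would verify that $\nabla = \diff + \rho\circ\omega$ is $\ho(H^0)$-equivariant and filtration preserving. The equivariance is exactly the Proposition of Section \ref{Holonomy action}. For filtration preservation the key point is that $\omega(H^{-i})\subset\g^{-i}$, which follows from the definition of the lifted filtration together with $\omega(\F)\subset\p = \g^0$. Hence for $X\in\Gamma(H^{-i})$ and $\psi\in E^q$ one has $\rho(\omega(X))\psi\in E^{q-i}$, because $\omega(X)\in\g^{-i}$ and $\g_m\E^q\subset\E^{m+q}$ with $m\geq -i$, while the flat part $X\cdot\psi$ keeps values in $E^q\subseteq E^{q-i}$ (the filtration being decreasing and $i\geq 0$); thus $\nabla_X\psi\in\Gamma(E^{q-i})$ as required.

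The step I expect to be the crux is the curvature condition a) of the Proposition preceding Theorem \ref{TrsvRock}. Here I would first invoke Lemma \ref{CurvBas} to see that $K$, and hence $R^{\nabla}=\rho(K)$, vanishes on $\F = H^0$, so it is concentrated in bidegree $(2,0)$ and $(R^{\nabla})^{(2,0)} = R^{\nabla}$; this disposes of the cases where one argument lies in $H^0$. For transverse arguments $X\in H^{-a}_x,\, Y\in H^{-b}_x$ with $a,b\geq 1$, the degree statement translates, via $R^{\nabla}(X,Y)=\rho(K(X,Y))$ and $\g_m\E^p\subset\E^{m+p}$, into $K(H^{-a},H^{-b})\subset\g^{-a-b+1}$, which is exactly the characterization of regularity in Proposition \ref{CurvatureRegular} ii). With the four hypotheses in place, Theorem \ref{TrsvRock} applies verbatim and yields that $(\Omega^{\bullet,0}(M,E),\dn_N)$ is transversally graded Rockland, completing the proof.
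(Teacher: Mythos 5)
Your proposal is correct and follows the same route as the paper: the paper's proof is a one-line reduction to Theorem \ref{TrsvRock}, citing the equivariance of $\nabla$ and Proposition \ref{CurvatureRegular} for the curvature degree condition, exactly as you do. You simply make explicit the intermediate verifications (the foliated Lie filtration from regularity, the filtration on $E$ from the grading element, filtration preservation of $\nabla$ via $\omega(H^{-i})\subset\g^{-i}$, and the bidegree $(2,0)$ of $R^{\nabla}$ via Lemma \ref{CurvBas}) that the paper leaves implicit.
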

\begin{proof}
The condition on the tractor connection and its curvature are consequences of Propositions \ref{Equivariance} and \ref{CurvatureRegular} respectively.
\end{proof}

We now want to study in more depth this sequence of operators and define an analog of the BGG sequences using the appropriate version of a transverse Laplacian. This could be done in more generality with more hypotheses that would automatically be satisfied for regular transverse parabolic geometries. We refer to \cite{dave2017graded} where this general machinery is described in the non-foliated case. Other examples could include a transverse version of the Rumin complex and its corresponding hypoelliptic Laplacian \cite{Rumin,RuminSeshadri}.

To do this recall there is the co-differential on $\Lambda^{\bullet}\left(\faktor{\g}{\p}\right)^*$ inherited from the isomorphism of $\p$-modules between $\left(\faktor{\g}{\p}\right)^*$ and $\p_+$. Using the isomorphism $\faktor{TM}{H^0} \cong M \times \faktor{\g}{\p}$ given by the Cartan connection we can define a bundle map called the Kostant co-differential:
$$\partial^* \colon \Lambda^{\bullet,0}T^*M \otimes E \to \Lambda^{\bullet-1,0}T^*M \otimes E.$$
The algebraic co-differential was $P$-equivariant so $\partial^*$ is $P$-equivariant. Since the identification between $\faktor{TM}{H^0}$ and $M\times\faktor{\g}{\p}$ is done using the Cartan connection $\omega$ the map $\partial^*$ is also $\ho(\ker(\omega))$-equivariant and thus $\partial^*$ is $\ho(H^0)$-equivariant. 

We now consider the differential operators of graded order $0$:
$$\Box_{\bullet} := \diff^{\nabla}_N\partial^* + \partial^*\diff^{\nabla}_N \colon \Omega^{\bullet,0}(M,E) \to  \Omega^{\bullet,0}(M,E).$$
Since $\overline{\gr(\dn_N)} = \partial_{\g_-}$ we get that $\forall x\in M, \overline{\gr(\Box_{\bullet})}_x = \Box_{\g_-,\bullet}$ where  $\Box_{\g_-,\bullet}$ is Kostant's Laplacian. Remember from Section \ref{Algebraic} that we have a Hodge decomposition and we can transpose it to the bundle level. Denote by 
$$\widetilde{P}_k \in \End(\Lambda^k \gt^*_{\sfrac{H}{H^0}}M\otimes \gr(E)),$$ 
the projection onto the generalised zero eigenspace of $\overline{\gr(\Box_k)}$. On each fiber $\tilde{P}_k$ corresponds to the same type of projection constructed at the algebraic level. In particular $\tilde{P}_k$ is $\ho(H^0)$-equivariant.

\begin{Remark}As described earlier, the graduation on $\E$ comes from the action of a particular element of $\g_0$ (which was unique) and $\E$ itself is written as a graded sum of subspaces. Consequently this grading is $G_0$-equivariant, where $G_0 \subset G$ integrates $\g_0$. Moreover we have $G_0 = \faktor{P}{P_+}$ thus $P$ preserves the filtration on $\E$ and acts trivially on the associated graded $\gr(\E)$. Consequently we have $\gr(E) \cong \E$ as $P$-representations. Therefore at the bundle level we have $\gr(E) = E$ as $\ho(H^0)$-equivariant bundles. In the case of regular transverse parabolic geometries we will therefore write $E$ instead of $\gr(E)$ for a tractor bundle.
\end{Remark}

The homology spaces $H_*(\p_+,\E)$ are $\g_0$-modules. We extend their structure to $\p$-modules with a trivial action of $\p_+$. We can thus define 
$$\mathcal{H}_* := M \times H_*(\p_+,\E) \to M,$$ 
as a $\ho(H^0)$-equivariant bundle. Using Kostant's co-differential we get natural $\ho(H^0)$-equivariant bundle maps $\pi \colon \ker(\partial^*_k)\to \mathcal{H}_k$. We can use the same arguments as in \cite{dave2017graded} to construct differential operators 
$$P_{\bullet} \colon \Omega^{\bullet,0}(M,E) \to \Omega^{\bullet,0}(M,E).$$ 
They satisfy the following properties: $P_{\bullet}^2 = P_{\bullet}, P_{\bullet}\Box_{\bullet} = \Box_{\bullet}P_{\bullet}$, $P_{\bullet}$ preserves the filtration of the corresponding bundle and $\gr(P_{\bullet}) = \tilde{P_{\bullet}}$. By construction of $\tilde{P_{\bullet}}$ we have that $\Box_{\bullet}$ is nilpotent on $\im(P_{\bullet})$ and invertible on $\ker(P_{\bullet})$. Choose a splitting of $\gt_{\sfrac{H}{H^0}}M$. This induces splittings $S_{\bullet}$ of all the bundles $\Lambda^{\bullet}\gt_{\sfrac{H}{H^0}}^*M\otimes E$. We can now relate the decompositions induced by $P_{\bullet}$ and $\tilde{P_{\bullet}}$ with the operators:
$$L_{\bullet} = P_{\bullet}S_{\bullet}\tilde{P_{\bullet}} + (1-P_{\bullet})S_{\bullet}(1-\tilde{P_{\bullet}}) \colon \Gamma(\Lambda^{\bullet}\gt^*_{\sfrac{H}{H^0}}M\otimes E) \isomto \Omega^{\bullet,0}(M,E).$$
These operators preserve the decompositions induced by $P_{\bullet}$ and $\tilde{P_{\bullet}}$ at the level of sections. They thus induce differential operators:
$$\bar{L_{\bullet}} \colon \Gamma(\mathcal{H}_{\bullet}) \to \im(P_{\bullet}),$$
with $\bar{L_{\bullet}}\pi_{\bullet} = P_{\bullet}$ and $\pi_{\bullet} \bar{L_{\bullet}} = 1$ where $\pi_{\bullet}$ are the natural quotient maps onto $\mathcal{H}_{\bullet}$.

\begin{Definition}The transverse BGG operators are the differential operators of graded filtered order $0$: 
$$D_{\bullet} := \pi_{\bullet+1} P_{\bullet+1} \dn_N L_{\bullet} \colon \mathcal{H}_{\bullet} \to \mathcal{H}_{\bullet}.$$
\end{Definition}

In the case of a non-foliated Cartan geometry (i.e. the leaves are the $P$-orbits of a free $P$-action), the space $\faktor{M}{P}$ is endowed with a parabolic geometry in the usual sense. In this case, since all the vector bundles used are $P$-equivariant, they descend to (locally trivial) vector bundles on $\faktor{M}{P}$. The same goes for the operators $D_{\bullet}$ and the resulting sequence of operators on $\faktor{M}{P}$ is the curved BGG sequence considered in \cite{CurvedBGGArticle,dave2017graded}.

\begin{Theorem}The BGG sequence $(\mathcal{H}_{\bullet},D_{\bullet})$ is transversally graded Rockland. Moreover if $(\Omega^{\bullet,0}(M,E),\dn_N)$ is a co-chain complex\footnote{This is the case for instance if there is an involutive complementary subbundle $N \subset TM$ to the foliation and the curvature $K$ vanishes.} then so is $(\mathcal{H}_{\bullet},D_{\bullet})$. The operators $\bar{L_{\bullet}}$ then define a chain map and induces an isomorphism between the cohomology of both complexes.
\end{Theorem}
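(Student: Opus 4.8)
The plan is to treat the three statements separately, deriving the two assertions that assume $(\dn_N)^2=0$ by direct operator manipulations and reserving the genuine analytic work for the transversal Rockland property. Throughout I use the recorded properties of the Dave--Haller data: $\bar{L}_\bullet\pi_\bullet=P_\bullet$, $\pi_\bullet\bar{L}_\bullet=1$, $P_\bullet^2=P_\bullet$, $\im(\bar{L}_\bullet)\subset\im(P_\bullet)$, together with $(\partial^*)^2=0$. The one extra ingredient is that, \emph{when} $(\dn_N)^2=0$, the operator $\dn_N$ commutes with $\Box_\bullet=\dn_N\partial^*+\partial^*\dn_N$; since $P_\bullet$ is the spectral projection onto the generalised $0$-eigenspace of $\Box_\bullet$, it follows that $\dn_N$ preserves the ranges and kernels of the $P_\bullet$, i.e. $\dn_N P_\bullet=P_{\bullet+1}\dn_N$.

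Granting this, the second assertion is a one-line computation. Suppressing the homological indices (each $\pi,P,\bar L$ carries the evident degree) and using $\bar{L}\pi=P$, $P^2=P$ and $\dn_N P=P\dn_N$,
\[
D_{\bullet+1}D_\bullet=\pi P\,\dn_N\,(\bar{L}\pi)\,P\,\dn_N\,\bar{L}=\pi P\,\dn_N\,P\,\dn_N\,\bar{L}=\pi P\,(\dn_N)^2\,\bar{L}=0,
\]
so that $(\mathcal H_\bullet,D_\bullet)$ is a complex.

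For the third assertion I would first check that $\bar{L}_\bullet$ is a chain map. Since $\bar{L}_\bullet$ lands in $\im(P_\bullet)$ and $\dn_N$ preserves these ranges, $P_{\bullet+1}\dn_N\bar{L}_\bullet=\dn_N\bar{L}_\bullet$, whence $\bar{L}_{\bullet+1}D_\bullet=\bar{L}_{\bullet+1}\pi_{\bullet+1}P_{\bullet+1}\dn_N\bar{L}_\bullet=P_{\bullet+1}\dn_N\bar{L}_\bullet=\dn_N\bar{L}_\bullet$. The same manipulation shows that $\pi_\bullet P_\bullet$ is a chain map from $(\Omega^{\bullet,0}(M,E),\dn_N)$ to $(\mathcal H_\bullet,D_\bullet)$. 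These two are mutually homotopy inverse: $\pi_\bullet P_\bullet\bar{L}_\bullet=\pi_\bullet\bar{L}_\bullet=1$, while $\bar{L}_\bullet\pi_\bullet P_\bullet=P_\bullet$ and $\Id-P_\bullet=\dn_N h+h\dn_N$ for the Green-type homotopy $h=\partial^*\Box_\bullet^{-1}(\Id-P_\bullet)$ (here $\Box_\bullet^{-1}$ is the inverse of $\Box_\bullet$ on $\ker(P_\bullet)$, and the identity uses only that $\dn_N$ commutes with $\Box_\bullet$, hence with the Green operator). Consequently $\bar{L}_\bullet$ induces the asserted isomorphism on cohomology.

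The main obstacle is the first assertion, since there $(\dn_N)^2$ need not vanish and the commutation arguments above are unavailable at operator level. Here I would pass to the transversal symbol: fixing $x\in M$ and $\pi\in\widehat{T_{\sfrac{H}{H^0},x}M}\setminus\{1\}$, the previous Lemma and Proposition identify $\mathcal D:=\diff\pi(\int_{H^0}\tilde\sigma^0(\dn_N))$ with the Chevalley--Eilenberg differential of $\gt_{\sfrac{H}{H^0},x}M\cong\g_-$ on $\gr(E_x)\otimes\mathcal H_\pi^\infty$, which satisfies $\mathcal D^2=0$ by regularity and whose cohomology vanishes by the nilpotent-group cohomology Lemma; moreover $\diff\pi(\int_{H^0}\tilde\sigma^0(\partial^*))=\partial^*_{\g_-}\otimes\Id$ and $\diff\pi(\int_{H^0}\tilde\sigma^0(P_\bullet))=\tilde P_\bullet\otimes\Id$. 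The delicate point is that $\tilde P_\bullet\otimes\Id$ is the harmonic projection of the \emph{untwisted} Kostant Laplacian $\Box_{\g_-}\otimes\Id$, not of the twisted Laplacian built from $\mathcal D$, so it does not commute with $\mathcal D$ and the homotopy argument above cannot be reused. To finish I would invoke the homological perturbation machinery of \cite{dave2017graded}: the untwisted differential $\partial_{\g_-}\otimes\Id$ carries the explicit Kostant--Hodge contraction onto $H_\bullet(\p_+,\E)\otimes\mathcal H_\pi^\infty=\mathcal H_{\bullet,x}\otimes\mathcal H_\pi^\infty$ given by $(\tilde P_\bullet\otimes\Id,\ \partial^*_{\g_-}\otimes\Id,\ (\Box_{\g_-}^{-1}\partial^*_{\g_-})\otimes\Id)$, and $\mathcal D$ is the perturbation of it by the term coming from the action on $\mathcal H_\pi^\infty$; the perturbation lemma transfers the contraction to one of $(\Lambda^\bullet\gt^*_{\sfrac{H}{H^0},x}M\otimes\gr(E_x)\otimes\mathcal H_\pi^\infty,\mathcal D)$ onto $(\mathcal H_{\bullet,x}\otimes\mathcal H_\pi^\infty,d_{\mathcal H})$, with transferred differential $d_{\mathcal H}=\diff\pi(\int_{H^0}\tilde\sigma^0(D_\bullet))$ because the splitting $L_\bullet$ is precisely the operator incarnation of this contraction. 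Since $(\,\cdot\,,\mathcal D)$ is exact, so is $(\mathcal H_{\bullet,x}\otimes\mathcal H_\pi^\infty,d_{\mathcal H})$, i.e. the symbol sequence of $(\mathcal H_\bullet,D_\bullet)$ is weakly exact, which is the transversal graded Rockland condition. The remaining care is the convergence of the perturbation series on the Fréchet space $\mathcal H_\pi^\infty$, which, as in \cite{dave2017graded}, is handled using the $\g_0$-weight filtration for which the composite $h\,\delta$ is nilpotent rather than via operator norms.
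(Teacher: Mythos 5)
Your treatment of the second and third assertions is correct, and your homotopy $h_\bullet=\partial^*\Box_\bullet^{-1}(\Id-P_\bullet)$ is in fact cleaner than the one written in the paper (which conjugates by the auxiliary operators $G_\bullet$ and $L_\bullet$); the commutation $\dn_N P_\bullet=P_{\bullet+1}\dn_N$ that you derive from $\dn_N\Box_\bullet=\Box_{\bullet+1}\dn_N$ together with the characterisation of $P_\bullet$ (nilpotency of $\Box_\bullet$ on $\im P_\bullet$, invertibility on $\ker P_\bullet$) is exactly what is needed there.

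The first and main assertion, however, contains a genuine gap, and the detour that creates it rests on a misreading of the construction of $P_\bullet$. You assert that $\diff\pi(\int_{H^0}\tilde\sigma^0(P_\bullet))=\tilde P_\bullet\otimes\Id$, the harmonic projection of the \emph{untwisted} Kostant Laplacian, and conclude that it cannot commute with $\mathcal D$. But $\tilde P_\bullet$ is only $\gr(P_\bullet)$, i.e.\ the diagonal ($q=p$) part of the graded symbol; $P_\bullet$ is a genuine differential operator, and its full transverse symbol is, by construction, the projection onto the generalised zero eigenspace of $\int_{H^0}\tilde\sigma^0(\Box_\bullet)$ --- the symbol of the \emph{twisted} Laplacian. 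Since $\mathcal D^2=0$, one has $\mathcal D\,\tilde\sigma^0(\Box_\bullet)=\mathcal D\,\tilde\sigma^0(\partial^*)\,\mathcal D=\tilde\sigma^0(\Box_{\bullet+1})\,\mathcal D$ at the transverse symbolic level, hence $\mathcal D$ \emph{does} commute with the symbol of $P_\bullet$, by the same nilpotent/invertible argument you use at the operator level. This is precisely the paper's route: $\int_{H^0}\tilde\sigma^0(L_{\bullet+1}^{-1}\dn_N L_\bullet)$ is then block diagonal for the decomposition $\im\tilde P_\bullet\oplus\ker\tilde P_\bullet$, the BGG symbol is one of the two diagonal blocks, and weak exactness of the full de Rham symbol complex (Theorem \ref{TrsvRock}) passes to each direct summand. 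Your replacement of this by the homological perturbation lemma is not wrong in spirit, but it leaves the decisive step unproved: you assert that the transferred differential $d_{\mathcal H}$ equals $\diff\pi(\int_{H^0}\tilde\sigma^0(D_\bullet))$ ``because $L_\bullet$ is the operator incarnation of this contraction'', which is exactly the identity that needs an argument --- the perturbation lemma produces its transfer maps as Neumann series of the form $p(1-\delta h)^{-1}$, whereas $D_\bullet$ is defined as the compression $\pi_{\bullet+1}P_{\bullet+1}\dn_N\bar L_\bullet$; without showing these agree (or at least have the same cohomology), the acyclicity of the transferred complex says nothing about the BGG symbol sequence.
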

\begin{proof}
Since $\int_{H^0}\tilde{\sigma}^{0}(\dn_N)^2 = 0$ then we have: 
$$\int_{H^0}\left(\tilde{\sigma}^{0}(\dn_N)\tilde{\sigma}^{0}(\Box_{\bullet})\right) = \int_{H^0}\left(\tilde{\sigma}^{0}(\Box_{\bullet+1})\tilde{\sigma}^{0}(\dn_N)\right).$$ 
Therefore by construction of the projectors we also get the commutation relations $\int_{H^0}\left(\tilde{\sigma}^{0}(\dn_N)\tilde{\sigma}^{0}(P_{\bullet})\right) = \int_{H^0}\left(\tilde{\sigma}^{0}(P_{\bullet+1})\tilde{\sigma}^{0}(\dn_N)\right)$. In particular at the symbolic level the symbols $\int_{H^0}\tilde{\sigma}^{0}(L_{\bullet+1}^{-1}\dn_NL_{\bullet})$ are diagonal with respect to the decomposition $\Gamma(\im(\tilde{P_{\bullet}})) \oplus \Gamma(\ker(\tilde{P_{\bullet}}))$. Under the isomorphism $\Gamma(\mathcal{H}_{\bullet})\cong \Gamma(\im(\tilde{P_{\bullet}}))$, $\int_{H^0}\tilde{\sigma}^{0}(D_{\bullet})$ appears as the one of the two diagonal terms of $\int_{H^0}\tilde{\sigma}^{0}(L_{\bullet+1}^{-1}\dn_NL_{\bullet})$. Therefore the BGG sequence $(\mathcal{H}_{\bullet},D_{\bullet})$ is also transversally graded Rockland.
If $(\Omega^{\bullet,0}(M,E),\dn_N)$ is a complex, then by construction so is $(\mathcal{H}_{\bullet},D_{\bullet})$. Moreover in this case the operator $\dn_N$ itself is diagonal (using the same arguments as before). We have 
$$L_{\bullet+1}^{-1}\dn_N L_{\bullet} = \tilde{D}_{\bullet} \oplus A_{\bullet},$$
and both $(\Gamma(\im(\tilde{P_{\bullet}})),\tilde{D}_{\bullet})$ and $(\Gamma(\ker(\tilde{P_{\bullet}})),A_{\bullet})$ are complexes. Moreover, the complex $(\Gamma(\im(\tilde{P_{\bullet}})),\tilde{D}_{\bullet})$ is isomorphic to the BGG complex so we need to show that the chain map 
$$L_{\bullet} \colon (\Gamma(\im(\tilde{P_{\bullet}})),\tilde{D}_{\bullet})\to (\Omega^{\bullet,0}(M,E),\dn_N),$$ 
is invertible up to homotopy. Define
\begingroup\makeatletter\def\f@size{11}\check@mathfonts $$G_{\bullet} = A_{\bullet-1}(1-\tilde{P_{\bullet-1}}) \gr(\partial^*_{\bullet})\gr(\Box_{\bullet})^{-1} + (1-\tilde P_{\bullet})\gr(\partial^*_{\bullet+1})\gr(\Box_{\bullet+1})^{-1}\gr(\dn_N)$$ \endgroup
as a differential operator of graded filtered order $0$ on $\ker(\tilde{P_{\bullet}})$. We have $\gr(G_{\bullet}) = 1$, thus \(G_{\bullet}\) is invertible. This operator conjugates the complex $(\Gamma(\ker(\tilde{P_{\bullet}})),A_{\bullet})$ to the acyclic complex $(\Gamma(\ker(\tilde{P_{\bullet}})),\gr(\dn_N))$. This gives the isomorphism in cohomology. We can also write an explicit homotopy. The operator
$$h_{\bullet} := L_{\bullet-1}G_{\bullet-1}(1-\tilde{P_{\bullet-1}})\gr(\partial^*_{\bullet})\gr(\Box_{\bullet})^{-1}G_{\bullet}^{-1}(1-\tilde{P_{\bullet}})L_{\bullet}^{-1},$$
maps $\Omega^{\bullet,0}(M,E)$ to $\Omega^{\bullet-1,0}(M,E)$ and satisfies
$$1-L_{\bullet}\tilde{P_{\bullet}}L_{\bullet}^{-1} = \dn_N h_{\bullet} + h_{\bullet+1} \dn_{N}.$$
Since $\tilde{P_{\bullet}}L_{\bullet}^{-1} L_{\bullet |\Gamma(\im(\tilde{P_{\bullet}}))} = 1$ and $\tilde{D_{\bullet}}\tilde{P_{\bullet}}L_{\bullet}^{-1} = \tilde{P_{\bullet}}L_{\bullet}^{-1} \dn_N$ we get that $\tilde{P_{\bullet}}L_{\bullet}^{-1}$ is the inverse of $L_{\bullet}$ up to homotopy.
\end{proof}


\begin{thebibliography}{10}

\bibitem{AtiyahBott}
Michael~F. Atiyah and Raoul Bott:
\emph{A {L}efschetz fixed point formula for elliptic differential operators}
Bull. Amer. Math. Soc. {\bf 72} (1966), 245--250.

\bibitem{BGG}
Joseph~N. Bern\v{s}te\u{\i}n, Israel~M. Gelfand, and Sergej~I. Gelfand:
\emph{Differential operators on the base affine space and a study of {${\g}$}-modules},
in: Lie Groups and their Representations, {P}roc. {S}ummer {S}chool, {B}olyai {J}\'{a}nos {M}ath. {S}oc., held at {B}udapest, 1971, Halsted Press, New York et al. (1975), 21--64.

\bibitem{Blumenthal}
Robert~A. Blumenthal:
\emph{Cartan connections in foliated bundles},
Michigan Math. J. {\bf 31(1)} (1984), 55--63.

\bibitem{Cartan}
Elie Cartan:
\emph{Sur les vari\'et\'es \`a connexion affine et la th\'eorie de la relativit\'e g\'en\'eralis\'ee (premi\`ere partie)},
Annales scientifiques de l'\'Ecole Normale Sup\'erieure, 3e s{\'e}rie {\bf 40} (1923), 325--412.

\bibitem{Chen}
ZhiQiang Chen:
\emph{S\'{e}ries compl\'{e}mentaires des groupes de {L}orentz et {$KK$}-th\'{e}orie},
J. Funct. Anal. {\bf 137(1)} (1996), 76--96.

\bibitem{ChevalleyEilenberg}
Claude Chevalley and Samuel Eilenberg:
\emph{Cohomology theory of {L}ie groups and {L}ie algebras},
Trans. Amer. Math. Soc. {\bf 63} (1948), 85--124.

\bibitem{cren2022transverse}
Cl\'ement Cren:
\emph{A transverse index theorem in the calculus of filtered manifolds},
J. Funct. Anal. {\bf 287(11)} (2024), Paper No. 110618.

\bibitem{CurvedBGGBook}
Andreas \v{C}ap and Jan Slov\'{a}k:
\emph{Parabolic geometries. {I}},
Volume {\bf 154} of Mathematical Surveys and Monographs, American Mathematical Society, Providence, RI (2009).


\bibitem{CurvedBGGArticle}
Andreas \v{C}ap, Jan Slov\'{a}k, and Vladim\'{\i}r Sou\v{c}ek:
\emph{Bernstein-{G}elfand-{G}elfand sequences},
Ann. of Math. (2) {\bf 154(1)} (2001), 97--113.

\bibitem{dave2017graded}
Shantanu Dave and Stefan Haller:
\emph{Graded hypoellipticity of {BGG} sequences},
Ann. Global Anal. Geom. {\bf 62(4)} (2022), 721--789.

\bibitem{debord2000groupoides}
Claire Debord:
\emph{Holonomy groupoids of singular foliations},
J. Differential Geom. {\bf 58(3)} (2001), 467--500.

\bibitem{HochschildSerre}
Gerhard Hochschild and Jean-Pierre Serre.
\emph{Cohomology of {L}ie algebras},
Ann. of Math. (2) {\bf 57} (1953), 591--603.

\bibitem{JulgSp}
Pierre Julg:
\emph{How to prove the {B}aum-{C}onnes conjecture for the groups {$Sp(n,1)$}?},
J. Geom. Phys. {\bf 141} (2019), 105--119.

\bibitem{JulgKasparov}
Pierre Julg and Gennadi Kasparov:
\emph{Operator {$K$}-theory for the group {${\rm SU}(n,1)$}},
J. Reine Angew. Math. {\bf 463} (1995), 99--152.

\bibitem{Kirillov}
Alexandre~A. Kirillov:
\emph{Lectures on the orbit method}, 
Volume {\bf 64} of Graduate Studies in Mathematics, American Mathematical Society, Providence, RI (2004).

\bibitem{Kostant}
Bertram Kostant:
\emph{Lie algebra cohomology and the generalized {B}orel-{W}eil theorem},
Ann. of Math. (2) {\bf 74} (1961), 329--387.

\bibitem{Koszul}
Jean-Louis Koszul:
\emph{Homologie et cohomologie des alg\`ebres de {L}ie},
Bull. Soc. Math. France {\bf 78} (1950), 65--127.

\bibitem{BGGLepowsky}
James Lepowsky:
\emph{A generalization of the {B}ernstein-{G}elfand-{G}elfand resolution},
J. Algebra {\bf 49(2)} (1977), 496--511.

\bibitem{Rumin}
Michel Rumin:
\emph{Formes diff\'{e}rentielles sur les vari\'{e}t\'{e}s de contact},
J. Differential Geom. {\bf 39(2)} (1994), 281--330.

\bibitem{RuminSeshadri}
Michel Rumin and Neil Seshadri:
\emph{Analytic torsions on contact manifolds},
Ann. Inst. Fourier (Grenoble) {\bf 62(2)} (2012), 727--782.

\bibitem{vanErpYuncken}
Erik van Erp and Robert Yuncken:
\emph{A groupoid approach to pseudodifferential calculi},
J. Reine Angew. Math. {\bf 756} (2019), 151--182.

\bibitem{YunckenBGG}
Robert Yuncken:
\emph{The {B}ernstein-{G}elfand-{G}elfand complex and {K}asparov theory for {${\rm SL}(3,\mathbb C)$}},
Adv. Math. {\bf 226(2)} (2011), 1474--1512.

\end{thebibliography}

\end{document}